\theoremstyle{theorem}
\newtheorem{theorem}{Theorem}[section]
\newtheorem{corollary}[theorem]{Corollary}
\newtheorem{proposition}[theorem]{Proposition}
\newtheorem{lemma}[theorem]{Lemma}
\newtheorem*{mtheorem}{Main Theorem}
\newtheorem*{corollary*}{Corollary}
\newtheorem*{proposition*}{Proposition}
\theoremstyle{definition}
\theoremstyle{remark}
\newcommand{\R}{{\mathbb R}}
\newcommand{\T}{{\mathbb T}}
\newcommand{\beq}{\begin{equation}}
\newcommand{\eeq}{\end{equation}}
\newcommand{\s}{\sigma}
\newcommand{\SU}{{\mathrm{SU}}}
\newcommand{\On}{{\mathrm O}}
\newcommand{\M}{{\mathrm M}}
\renewcommand{\T}{{\mathrm T}}
\newcommand{\G}{{\mathrm G}}
\newcommand{\K}{{\mathrm K}}
\renewcommand{\L}{{\mathrm L}}
\newcommand{\Ng}{{\mathrm N}}
\DeclareMathOperator\Ad{Ad}
\newcommand{\Ric}{{\rm Ric}}
\newcommand{\fre}{\mathfrak{e}}
\newcommand{\gc}{\mathfrak{c}}
\newcommand{\gf}{\mathfrak{f}}
\renewcommand{\gg}{\mathfrak{g}}
\newcommand{\gk}{\mathfrak{k}}
\newcommand{\gl}{\mathfrak{l}}
\newcommand{\gm}{\mathfrak{m}}
\newcommand{\gp}{\mathfrak{p}}
\newcommand{\gq}{\mathfrak{q}}
\newcommand{\gu}{\mathfrak{u}}
\newcommand{\so}{\mathfrak{so}}
\newcommand{\su}{\mathfrak{su}}
\newcommand{\gsp}{\mathfrak{sp}}
\newcommand{\st}{\ |\ }
\newcommand{\onabla}{\overline{\nabla}}
\newcommand{\og}{\overline{g}}
\newcommand{\oH}{\overline{H}}
\numberwithin{equation}{section}
\title[Infinite families of homogeneous  Bismut Ricci flat manifolds]{Infinite families of homogeneous  Bismut Ricci flat manifolds}
\author{Fabio Podest\`a and Alberto Raffero}
\subjclass[2020]{
53C25, %Special Riemannian manifolds (Einstein, Sasakian, etc.)
53C07, %Special connections and metrics on vector bundles (Hermite-Einstein, Yang-Mills) [
53C30, %Differential geometry of homogeneous manifolds
53B05, %Linear and affine connections
53E20%Ricci flows
}
\keywords{Bismut connection, Ricci flat connection, homogeneous space}
\address{Dipartimento di Matematica e Informatica ``U.~Dini'' \\ Universit\`a degli Studi di Firenze\\ Viale Morgagni 67/a\\ 50134 Firenze\\ Italy\\ }
\email{fabio.podesta@unifi.it}
\address{Dipartimento di Matematica ``G.~Peano'' \\ Universit\`a degli Studi di Torino\\
Via Carlo Alberto 10\\
10123 Torino\\ Italy}
\email{alberto.raffero@unito.it}
\begin{document}

%%%%%%%%%%%%%%%%%%%%%%%%%%%%%%%%%%%%%%%%%%%%%%%%%%%%%%%%%%%%%%%%%%%%%%%%%%%%%%%%%%%%%%%%%
%%%%%%%%%%%%%%%%%%%%%%%%%%%%%%%%%%%%%%%%%%%%%%%%%%%%%%%%%%%%%%%%%%%%%%%%%%%%%%%%%%%%%%%%%
%																ABSTRACT
%%%%%%%%%%%%%%%%%%%%%%%%%%%%%%%%%%%%%%%%%%%%%%%%%%%%%%%%%%%%%%%%%%%%%%%%%%%%%%%%%%%%%%%%%
%%%%%%%%%%%%%%%%%%%%%%%%%%%%%%%%%%%%%%%%%%%%%%%%%%%%%%%%%%%%%%%%%%%%%%%%%%%%%%%%%%%%%%%%%
\begin{abstract} 
Starting from compact symmetric spaces of inner type, we provide infinite families of compact homogeneous spaces carrying invariant non-flat Bismut connections with vanishing Ricci tensor.
These examples turn out to be generalized symmetric spaces of order $4$ and (up to coverings) can be realized as minimal submanifolds of the Bismut flat model spaces, namely compact Lie groups. 
This construction generalizes the standard Cartan embedding of symmetric spaces. 
\end{abstract}

\maketitle

%%%%%%%%%%%%%%%%%%%%%%%%%%%%%%%%%%%%%%%%%%%%%%%%%%%%%%%%%%%%%%%%%%%%%%%%%%%%%%%%%%%%%%%%%
%%%%%%%%%%%%%%%%%%%%%%%%%%%%%%%%%%%%%%%%%%%%%%%%%%%%%%%%%%%%%%%%%%%%%%%%%%%%%%%%%%%%%%%%%
%																INTRODUCTION
%%%%%%%%%%%%%%%%%%%%%%%%%%%%%%%%%%%%%%%%%%%%%%%%%%%%%%%%%%%%%%%%%%%%%%%%%%%%%%%%%%%%%%%%%
%%%%%%%%%%%%%%%%%%%%%%%%%%%%%%%%%%%%%%%%%%%%%%%%%%%%%%%%%%%%%%%%%%%%%%%%%%%%%%%%%%%%%%%%%
\section{Introduction}\label{intro}
\smallskip

On a Riemannian $n$-manifold $(\M,g)$, metric connections $\nabla$ are completely characterized by their torsion tensor 
\[
T_XY = \nabla_XY-\nabla_YX-[X,Y], \quad X,Y\in\Gamma(T\M),
\] 
and they can be classified into eight families determined by the irreducible $\On(n)$-module decomposition of the space $\R^n \otimes \Lambda^2{\R^n}$ \cite{Car,TV}. 
The class of metric connections with totally skew-symmetric torsion is made up of those connections for which the 3-covariant tensor 
\[
H(X,Y,Z) \coloneqq g(T_XY,Z)
\] 
is totally skew-symmetric. Every such connection is related to the Levi Civita connection $\nabla^g$ of $(\M,g)$ as follows 
\begin{equation}\label{BismutConn}
g(\nabla_XY,Z) = g(\nabla^g_XY,Z) + \frac12 H(X,Y,Z),
\end{equation}
and it has the same geodesics as $\nabla^g$.   

Consistently with \cite{FS}, given a 3-form $H\in\Omega^3(\M)$, we shall refer to the metric connection $\nabla$ defined  in \eqref{BismutConn} as the {\em Bismut connection} induced by the pair $(g,H)$. 

\smallskip

In complex non-K\"ahler geometry, a typical example is given by the Bismut connection of a Hermitian manifold $(\M,g,J)$, 
which is the unique metric connection with totally skew-symmetric torsion preserving the complex structure $J$  \cite{Bis,Gau}. 
In such a case, the torsion is given by $H = d^c\omega$, where $\omega=g(J\cdot,\cdot)$ is the fundamental $2$-form.  
If the torsion is also closed, so that $dd^c\omega=0$, one obtains the widely studied class of strong K\"ahler with torsion complex manifolds (SKT).  
On the other hand, Riemannian manifolds carrying a Bismut connection $\nabla$ with $\nabla$-parallel torsion have gained a lot of attention in the literature, 
since they are naturally associated with various geometrically meaningful structures as naturally reductive spaces, nearly K\"ahler and Sasakian structures as well as nearly parallel G$_2$-structures among others, 
see e.g.~\cite{Agr,AFF,CMS,FI}.

Metric connections with totally skew symmetric torsion are also an important object of interest in theoretical and mathematical physics, e.g.~in Type II string theory or in supergravity theories,
see \cite{Agr,FI, IP} for more details and references.

\smallskip

In this paper, we will focus on Bismut connections $\nabla$ with closed torsion form $H$ and zero Ricci tensor $\Ric^\nabla$.  
In generalized Riemannian geometry, Bismut connections with closed torsion play a prominent role, as they are naturally associated with generalized metrics on exact Courant algebroids.  
Moreover, the vanishing of $\Ric^\nabla$ implies that the associated generalized metric  is generalized Einstein, see \cite{Gar,FS}.

Since the torsion of $\nabla$ is non-vanishing, the Ricci tensor $\Ric^{\nabla}$ is not symmetric, and one has (see \cite[Prop.~3.18]{FS})
\[
\Ric^\nabla = \Ric_g -\frac14 H^2 - \frac12 \delta_g H,
\]
where $\Ric_g$ denotes the Ricci tensor of $\nabla^g$, $\delta_g$ is the formal adjoint of $d$, and the symmetric 2-tensor $H^2$ is defined as $H^2(X,Y) \coloneqq g(\imath_XH,\imath_YH)$, for every $X,Y\in\Gamma(T\M)$.

Therefore, a Bismut connection $\nabla$ with closed torsion form $H$ has zero Ricci tensor if and only if $H$ is $g$-harmonic and the Ricci tensor of $g$ satisfies the equation $\Ric_g = \frac14 H^2$. 
In this context, we shall say that $(g,H)$ is a {\em Bismut Ricci flat pair} ({\em BRF pair} for short) if $H$ is closed and the corresponding Bismut connection $\nabla$ is Ricci flat. 

A further motivation to consider BRF pairs relies on the fact that they provide fixed points of the {\em generalized Ricci flow}, 
a geometric flow evolving a family of Riemannian metrics $g_t$ and 2-forms $b_t$ as follows 
\[
\left\{
\begin{split}
\frac{\partial}{\partial t} g_t &= -2\,\Ric_{g_t} +\frac12 H^2_t,\\
\frac{\partial}{\partial t} b_t &= - \delta_{g_t} H_t,
\end{split}
\right.
\]
where $H_t = H + db_t$ for a given background closed 3-form $H$. 
This flow was introduced in \cite{CFMP,OSW} in the context of renormalization group flows of two-dimensional nonlinear sigma models, 
and it can be considered as a generalization of Hamilton's Ricci flow to Bismut connections with closed torsion form \cite{Str},  
and as a flow of generalized metrics on exact Courant algebroids \cite{Gar,FS, Str2}. 
Moreover, it is also related to some geometric flows in Hermitian Geometry, like the pluriclosed flow and the generalized K\"ahler Ricci flow, see e.g.~\cite{GJS,Str3,StTi0,StTi,StTi2}. 

A key example of (homogeneous) manifold with a BRF pair is provided by a semisimple compact Lie group $\G$ endowed with the bi-invariant metric $g_{\G}$ induced by $-B$, where $B$ denotes its Cartan-Killing form, 
together with the {\em standard} harmonic $3$-form $H_\G(X,Y,Z) = g_\G([X,Y],Z)$, where $X,Y,Z$ are left-invariant vector fields. 
Indeed, it is well-known that the Bismut connection on $\G$ induced by the pair $(g_{\G},H_\G)$ is flat.  
Viceversa, it has been proved that a compact simply connected Riemannian manifold carrying a flat Bismut connection with closed $3$-form is isometric to the product of compact simple Lie groups, see \cite{AF,CaSc}. 

In \cite{FS}, the authors asked whether an invariant Bismut connection with zero Ricci tensor on a homogeneous manifold should be flat, 
generalizing the well-known Alekseevsky-Kimelfeld Theorem in the Riemannian case \cite{AK}. 
In \cite{PR}, we answered this question negatively, proving the existence of invariant non-flat BRF pairs on a series of $5$-dimensional homogeneous manifolds $\M_{p,q}$ 
parametrized by a pair of positive integers with $\mathrm{gcd}(p,q)=1$. These manifolds can be represented as quotient spaces $(\SU(2)\times\SU(2))/\T^1_{\rm{diag}}$, 
for some suitably embedded torus $\T^1 \subset \SU(2)\times\SU(2)$. 

The aim of this paper is to generalize the previous result by providing a construction which allows to obtain infinite families of non-flat BRF pairs on compact homogeneous manifolds. 
These families include the manifold $\M_{1,1}$, but not $\M_{p,q}$ when $p,q\neq 1$. 
The new examples, which are constructed starting from compact symmetric spaces of inner type, admit a subcover which can be isometrically and minimally embedded into a compact semisimple Lie group in such a way that 
the harmonic $3$-form coincides with the pull-back of the standard harmonic $3$-form on the group.

Our main result can be stated as follows.  

\begin{mtheorem}
Let $\G$ be a compact, connected semisimple Lie group and let $\sigma$ be an involutive inner automorphism of $\G$. 
If $\K\subset\G$ is a compact subgroup with $(\G^\sigma)^o\subseteq \K\subseteq \G^\s$, then the homogeneous space $\M=(\G\times\G)/\K_{\mathrm{diag}}$, 
where $\K_{\mathrm{diag}} = \{(k,k)\in\G\times\G \st k\in\K\}$, is endowed with an invariant non-flat BRF pair $(\og, \oH)$.  

Moreover, if $\K = \G^\s$, there exists a $\G\times \G$-equivariant minimal embedding $\iota : \M \hookrightarrow \G\times\G$ 
so that  $(\og, \oH) = \iota^*(g_\G\oplus g_\G,H_\G\oplus H_\G)$. 
\end{mtheorem}

We remark that the pair $(\G,\K)$ is known as a {\em symmetric pair}, and that the involution $\s$ is inner if and only if $\G^\s$ has maximal rank in $\G$ (see e.g.~\cite[Ch.~IX, Thm 5.6]{Hel}). 
We also recall that every compact semisimple Lie group admits at least one involutive inner automorphism. 
The list of all symmetric pairs $(\gg,\gk)$ of compact type with $\gg$ simple and  $\mathrm{rank}\, \gg = \mathrm{rank}\, \gk$ can be deduced from \cite[Ch.~X, Table V]{Hel}. 
For the sake of completeness, we list them in Table \ref{Table1}. 

As an immediate consequence, we have the following. 
\begin{corollary*}
There exist infinitely many compact homogeneous spaces admitting an invariant non-flat BRF pair $(g,H)$. 
\end{corollary*}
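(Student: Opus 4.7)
The plan is to first construct, in the case $\K = \G^\sigma$, an explicit $(\G\times\G)$-equivariant embedding $\iota\colon \M \hookrightarrow \G\times\G$ generalizing the Cartan embedding, then to take $(\og,\oH) = \iota^*(g_\G\oplus g_\G, H_\G\oplus H_\G)$ and verify directly that this is a BRF pair and that $\iota$ is minimal. The general case $(\G^\sigma)^o\subseteq\K\subseteq\G^\sigma$ will then follow by a covering argument. Throughout, write $\sigma=\mathrm{Int}(s)$ with $s\in\G$, decompose $\gg = \gk\oplus\gm$ into the $\pm 1$-eigenspaces of $d\sigma$ (so $\gk$ has maximal rank in $\gg$), and use the reductive decomposition
\[
\gg\oplus\gg \,=\, \gk_{\mathrm{diag}}\,\oplus\,\gp,\qquad \gp \,=\, \gk_{\mathrm{anti}}\oplus\gm_1\oplus\gm_2,
\]
where $\gk_{\mathrm{anti}}=\{(X,-X):X\in\gk\}$ and $\gm_1,\gm_2$ are the two copies of $\gm$ inside $\gg\oplus\gg$. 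The automorphism $\tau(g_1,g_2)=(\sigma(g_2),g_1)$ of $\G\times\G$ has order $4$ with $\mathrm{Fix}(\tau)=\G^\sigma_{\mathrm{diag}}$, identifying $\M$ with a generalized symmetric space of order $4$.

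Inspired by $\tau$, define
\[
\iota\colon\M\hookrightarrow\G\times\G,\qquad \iota(g_1,g_2) \,=\, (g_1,g_2)\cdot\tau(g_1,g_2)^{-1} \,=\, (g_1\sigma(g_2^{-1}),\, g_2g_1^{-1}).
\]
Checking well-definedness modulo $\K_{\mathrm{diag}}$ uses only $\sigma|_\K=\Id$; injectivity and equivariance follow once one notes that $\iota$ intertwines left multiplication on $\M$ with the twisted action $(a,b)\star(x,y) = (ax\sigma(b^{-1}),\, bya^{-1})$ on $\G\times\G$, and that $\star$ acts by bi-invariant isometries and hence preserves both $g_\G\oplus g_\G$ and $H_\G\oplus H_\G$. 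Differentiating at $(e,e)$ yields $d\iota\colon\gp\to\gg\oplus\gg$ given by
\[
(X,-X)\mapsto(2X,-2X),\qquad (Y,0)\mapsto (Y,-Y),\qquad (0,Z)\mapsto(Z,Z),
\]
whose $(-B\oplus -B)$-orthogonal complement is exactly $\gk_{\mathrm{diag}}$. From this description, $\og\coloneqq\iota^*(g_\G\oplus g_\G)$ is diagonal on the decomposition of $\gp$ with scale factors $8,2,2$ on $\gk_{\mathrm{anti}},\gm_1,\gm_2$. The antisymmetry identity $H_\G(-A,-B,-C)=-H_\G(A,B,C)$ forces systematic cancellations, leaving $\oH\coloneqq\iota^*(H_\G\oplus H_\G)$ nonzero on $\gp^{\otimes 3}$ only on the summand $\gk_{\mathrm{anti}}\otimes\gm_1\otimes\gm_2$, where
\[
\oH\bigl((X,-X),(Y,0),(0,Z)\bigr) \,=\, 4\,g_\G([X,Y],Z), \qquad X\in\gk,\ Y,Z\in\gm.
\]

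Closedness $d\oH=0$ is automatic as $\oH$ is the pullback of a closed form. Harmonicity $\delta_{\og}\oH=0$ is obtained via an invariance-reduced trace calculation at the origin, using the Jacobi identity and the $B$-orthogonality of $\gk$ and $\gm$. The BRF identity $\Ric_{\og}=\tfrac14\oH^2$ is then checked block-wise on the $\Ad(\G^\sigma)$-summands of $\gp$: one computes $\Ric_{\og}$ via the standard formula for the Ricci tensor of an invariant metric on a reductive homogeneous space and $\oH^2$ from the explicit expression above; the scale factors $8,2,2$ are precisely the ones that make the two sides agree, reducing the identity to combinatorial identities among the structure constants of $\gg$. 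Minimality of $\iota$ is established by computing the trace of the second fundamental form at $(e,e)$: by equivariance the mean curvature is $\Ad(\K_{\mathrm{diag}})$-invariant and lies in $\gk_{\mathrm{diag}}$, and a direct trace calculation using the bi-invariance of the ambient metric shows that the contributions from $\gk_{\mathrm{anti}}$, $\gm_1$, and $\gm_2$ cancel. Non-flatness of the induced Bismut connection follows from the Cartan--Schouten rigidity results \cite{AF,CaSc}: a compact simply connected Bismut-flat manifold with closed torsion must be a product of compact simple Lie groups, which $\M$ is not whenever $\sigma$ is nontrivial.

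Finally, for the general case $(\G^\sigma)^o\subseteq\K\subseteq\G^\sigma$, the natural projection $(\G\times\G)/\K_{\mathrm{diag}}\to(\G\times\G)/\G^\sigma_{\mathrm{diag}}$ is a finite covering, so the invariant BRF pair on the base lifts to an invariant BRF pair on $(\G\times\G)/\K_{\mathrm{diag}}$. I expect the main obstacle to be the block-wise verification of $\Ric_{\og}=\tfrac14\oH^2$ together with the minimality of $\iota$, both of which reduce to careful tensorial computations on $\gp$ organized by the $\Ad(\G^\sigma)$-isotypic decomposition, and crucially rely on the specific scale factors $8,2,2$ appearing in $\og$.
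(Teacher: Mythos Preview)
Your outline is essentially correct and would yield the same invariant pair $(\og,\oH)$ as the paper, but the route is genuinely different. The paper realizes $\M$ as the orbit of $(e,z)$ under the action $(g_1,g_2)\cdot(x_1,x_2)=(g_1x_1g_2^{-1},g_1x_2g_2^{-1})$ on $\G\times\G$, rather than via your Cartan-type map coming from the order-four automorphism; the two embeddings differ by an ambient isometry preserving $(g_\G\oplus g_\G,H_\G\oplus H_\G)$, so the induced data coincide (your scale factors $8,2,2$ and the description of $\oH$ are correct). More significantly, the paper does \emph{not} verify $\Ric_{\og}=\tfrac14\oH^2$ and $\delta_{\og}\oH=0$ intrinsically as you propose. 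Instead it proves a Gauss-type identity (their Proposition~2.4) expressing $\Ric^{\onabla}$ for any submanifold of a Bismut-flat ambient space purely in terms of the second fundamental form $h$ and the normal contractions $\imath_{\xi_i}H$; after computing $h$ (which is nonzero only on $\gm_1\times\gm_2$) and $\imath_{\xi_i}H$ (same pattern), the vanishing of $\Ric^{\onabla}$ drops out without ever computing $\Ric_{\og}$. Your direct block-wise computation would work, but is considerably heavier; the paper's extrinsic argument bypasses it entirely, and minimality is a free byproduct of the same computation of $h$.

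There is one genuine gap in your proposal: the non-flatness step. Invoking Cartan--Schouten requires showing that the \emph{universal cover} $(\tilde\M,\og)$ is not isometric to a product of compact simple Lie groups with bi-invariant metrics, and ``which $\M$ is not whenever $\sigma$ is nontrivial'' is not an argument---for instance $\G/\K$ can be diffeomorphic to a Lie group (e.g.\ $S^3$), so you must rule out the specific metric $\og$, not just the diffeomorphism type. The paper avoids this entirely by using the same Gauss-type formula to write $R^{\onabla}$ explicitly and then exhibiting, for any $x,y\in\gq$ with $[x,y]\neq0$, a component $R^{\onabla}(\widehat X,\widehat Y,\widehat Z,\widehat U)=-4\|[x,y]\|^2\neq0$ with $X=(x,0),Y=(y,0)\in\gm_1$ and $Z=(0,x),U=(0,y)\in\gm_2$. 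Given your computation of $d\iota$, the same calculation is available to you and would close the gap cleanly; alternatively, you could show $\oH$ is not $\onabla$-parallel (as the paper also does), which on a bi-invariant Lie group model it would have to be.
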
 
 
\begin{table}[ht]
\begin{center}
\renewcommand\arraystretch{1.4}
\begin{tabular}{|c|c|c|}
\hline
class  & $\gg$			&	$\gk$		\\ \hline \hline
$A~III$ & $\su(n)$		& $\mathfrak{s}(\gu(p)+\gu(n-p))$				\\ \hline
$BD~I$ & $\so(2n),\ n\geq 4$		& $\so(2p) + \so(2(n-p))$						\\ \hline
$BD~I$ & $\so(2n+1),\ n\geq 2$	& $\so(2p) + \so(2(n-p)+1)$						\\ \hline
$D~III$ & $\so(2n),\ n\geq 3$			& $\gu(n)$								\\ \hline
$C~I$ & $\gsp(n),\ n\geq 3$			& $\gu(n)$						\\ \hline
$C~II$ & $\gsp(p+q)$		& $\gsp(p) + \gsp(q)$				\\ \hline
$E~II$ & $\fre_6$			& $\su(6) + \su(2)$					\\ \hline
$E~III$	 & $\fre_6$			& $\so(10) + \R$			\\ \hline
\end{tabular}
\hspace{0.1cm}
\begin{tabular}{|c|c|c|}
\hline
class  & $\gg$			&	$\gk$		\\ \hline \hline
$E~V$	 & $\fre_7$			& $\su(8)$					\\ \hline
$E~VI$ & $\fre_7$			& $\so(12) + \su(2)$			\\ \hline
$E~VII$	 & $\fre_7$			& $\fre_6 + \R$				\\ \hline
$E~VIII$ & $\fre_8$			& $\so(16)$				\\ \hline
$E~IX$ & $\fre_8$			& $\fre_7+ \su(2)$			\\ \hline
$F~I$ & $\gf_4$			& $\gsp(3)+ \su(2)$				\\ \hline
$F~II$ & $\gf_4$			& $\so(9)$					\\ \hline
$G$ & $\gg_2$			& $\su(2) + su(2)$					\\ \hline
\end{tabular}
\vspace{0.2cm}
\caption{Symmetric pairs of compact type $(\gg,\gk)$ with $\gg$ simple and $\mathrm{rank}\, \gg = \mathrm{rank}\, \gk$}\label{Table1}
\end{center}
\end{table}

We note that the space $(\G\times \G)/\K_{\rm diag}$ is diffeomorphic to $\G\times (\G/\K)$ (see details in Section \ref{SecBetti}), but this diffeomorphism is not isometric 
when the latter is endowed with the product of $g_\G$ and the standard metric on the symmetric space $\G/\K$. 
We also remark that the homogeneous space $\M = (\G\times\G)/\G^\sigma_{\mathrm{diag}}$ is a $4$-{\em symmetric} space defined by means of the order four automorphism of $\G\times\G$ 
given by $(g_1,g_2) \mapsto (g_2,\sigma(g_1))$, see e.g.~\cite{Jim}. 
It is a well-known fact that for any symmetric space $\G/\G^\s$ the Cartan embedding $\phi:\G/\G^\s\to \G$ given by $\phi(a\G^\s)=\s(a)a^{-1}$ is totally geodesic. 
Moreover, the pull-back of the $3$-form $H_\G$ vanishes on $\G/\G^\s$. 
Our embedding of the $4$-symmetric space $\M$ into $\G\times \G$ is equivalent to the embedding of $\G\times (\G/\G^\sigma)$ into $\G\times \G$ via $ {\rm Id}\times \phi$,  
and it gives a generalization of the Cartan embedding for $2$-symmetric spaces, with minimality in place of the  total geodesic feature. 
Finally, we point out that $\oH$ is never zero, as otherwise $(\M,\og)$ would be Ricci flat and thus flat by \cite{AK}. 

\smallskip

The Main Theorem is proved in Section \ref{Proof}. 
We first consider the case when $\K=\G^\s$, where $\s=\tau_z$ is the conjugation by some element $z\in\G$. 
We construct an action of $\G\times \G$ on itself preserving the flat BRF pair $(g_\G\oplus g_\G,H_\G\oplus H_\G)$, and we identify a particular minimal orbit with $\M$. 
We then show that the induced metric $\og$ and $3$-form $\oH$ provide an invariant BRF pair on $\M$ whose corresponding Bismut connection $\onabla$ is not flat. 
To conclude the proof, it is then sufficient to observe that $(\G\times\G)/\K_{\mathrm{diag}}$ is a finite cover of $(\G\times\G)/\G^\s_{\mathrm{diag}}$. 
In Proposition \ref{HnotPar}, we also prove that the torsion $\oH$ is not parallel with respect to $\onabla$. 
\par \smallskip

Some remarks on the geometry of $\M$ are discussed in Section \ref{SecBetti}, where we prove the following. 
\begin{proposition}\label{PropTop}
The manifold $\M$ has finite fundamental group and $b_3(\M) = \ell$, where $\ell$ is the number of simple factors of $\G$.  
Moreover
\begin{enumerate}[1)]
\item\label{PropTop1}  if $\G$ is simple, then $b_3(\M)=1$ and $\M$ is not diffeomorphic to the product of two manifolds carrying a BRF pair with non-trivial torsion;
\item\label{PropTop2}  if $\G$ is semisimple not simple, then $\M$ is finitely covered by the product of $\ell$ factors of the form $(\G'\times\G')/\K'_{\mathrm{diag}}$, with $\G'$ simple and $(\G',\K')$ an  inner symmetric pair. 
\end{enumerate}
\end{proposition}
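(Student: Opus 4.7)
The plan is to exploit the $\G\times\G$-equivariant diffeomorphism
\[
\M = (\G\times\G)/\K_{\mathrm{diag}} \;\cong\; \G\times(\G/\K), \qquad (g_1,g_2)\K_{\mathrm{diag}}\mapsto (g_1g_2^{-1},g_2\K),
\]
mentioned in the introduction, in order to reduce everything to a K\"unneth/homotopy computation on the two factors. The group $\pi_1(\G)$ is finite since $\G$ is compact semisimple, and the homotopy long exact sequence of the fibration $\K\to\G\to\G/\K$, combined with the finiteness of $\pi_1(\G)$ and of $\K/\K^o$, yields $\pi_1(\G/\K)$ finite; hence $\pi_1(\M)$ is finite. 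For the Betti number, Hopf's theorem gives $H^*(\G;\R)$ as an exterior algebra on primitive odd-degree generators (none in degree one, because $\G$ is semisimple), with exactly one degree-three generator per simple factor; thus $H^1(\G)=H^2(\G)=0$ and $\dim H^3(\G)=\ell$. Since $\sigma$ is inner, $\K^o=(\G^\sigma)^o$ is connected of maximal rank in $\G$, so Borel's theorem gives
\[
H^*(\G/\K^o;\R)\;\cong\; S(\gt^*)^{W_{\K^o}}\big/\bigl(S(\gt^*)^{W_\G}_{+}\cdot S(\gt^*)^{W_{\K^o}}\bigr),
\]
which is concentrated in even degrees. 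Passing to $(\K/\K^o)$-invariants preserves this, so $H^1(\G/\K)=H^3(\G/\K)=0$, and only $H^3(\G)\otimes H^0(\G/\K)$ survives in the K\"unneth decomposition of $H^3(\M)$, giving $b_3(\M)=\ell$.

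For part \ref{PropTop1}, $\G$ simple forces $b_3(\M)=1$. Arguing by contradiction, suppose $\M\cong N_1\times N_2$ with each $N_i$ carrying a BRF pair $(g_i,H_i)$ with $H_i\neq 0$. Since $dH_i=0$ and the Bismut Ricci tensor of $(g_i,H_i)$ vanishes, the discussion in Section \ref{intro} gives $\delta_{g_i}H_i=0$, so $H_i$ is a non-zero harmonic form and therefore represents a non-trivial de Rham class; hence $b_3(N_i)\geq 1$. K\"unneth then yields $b_3(\M)\geq 2$, a contradiction.

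For part \ref{PropTop2}, pass to the universal cover $\pi\colon\tilde\G\to\G$ and decompose $\tilde\G=\tilde\G_1\times\cdots\times\tilde\G_\ell$ with each $\tilde\G_j$ simply connected and simple. Writing $\sigma=\tau_z$ for some $z\in\G$ and lifting $z$ to $\tilde z\in\tilde\G$, set $\tilde\sigma\coloneqq\tau_{\tilde z}$; this is an involution because $\tilde z^2$ lies over the centre of $\G$ and is therefore central in $\tilde\G$. Being inner, $\tilde\sigma$ preserves each factor $\tilde\G_j$ and restricts to an inner involution $\tilde\sigma_j$ there (not all trivial, else $\sigma$ would be trivial). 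The classical fact that the fixed subgroup of an involution of a simply connected compact Lie group is connected gives $\tilde\G^{\tilde\sigma}=\prod_j\tilde\G_j^{\tilde\sigma_j}$, and therefore
\[
(\tilde\G\times\tilde\G)/\tilde\G^{\tilde\sigma}_{\mathrm{diag}} \;=\; \prod_{j=1}^{\ell}(\tilde\G_j\times\tilde\G_j)/(\tilde\G_j^{\tilde\sigma_j})_{\mathrm{diag}}.
\]
The map from this product to $\M$ induced by $\pi\times\pi$ is a finite covering, since $\ker\pi$ is finite central, $\pi(\tilde\G^{\tilde\sigma})=(\G^\sigma)^o$ by connectedness and dimension count, and $\K/(\G^\sigma)^o$ is finite. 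The main obstacle I expect is the careful bookkeeping of this factorisation across simple factors when $\G$ has non-trivial centre, together with the correct invocation of Borel's odd-cohomology vanishing in the equal rank setting; the rest is K\"unneth and standard homotopy theory of fibrations.
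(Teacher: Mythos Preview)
Your proof is correct and follows essentially the same route as the paper: both reduce to the product description $\M\cong \G\times(\G/\K)$, use K\"unneth together with Borel's vanishing of the odd Betti numbers of $\G/\K$ (equal rank) and $b_3(\G)=\ell$, invoke the harmonicity of the torsion form for part~\ref{PropTop1}, and pass to the simply connected cover and split along simple factors for part~\ref{PropTop2}. Your write-up simply supplies more detail (the explicit diffeomorphism, the Hopf and Borel statements, the lift $\tilde\sigma=\tau_{\tilde z}$ and Steinberg/Helgason connectedness of $\tilde\G^{\tilde\sigma}$); note that the paper's $\tilde\K\coloneqq\pi^{-1}(\K^o)$ coincides with your $\tilde\G^{\tilde\sigma}$, as both are connected subgroups with Lie algebra $\gk$.
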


\medskip

{\bf Notation.} Throughout the paper, Lie groups will be denoted by capital letters and their  Lie algebras will be denoted by the respective gothic letters.
When a Lie group $\G$ acts on a manifold $\M$, the vector field associated to any $X\in\gg$ will be denoted by $\widehat X$.

%%%%%%%%%%%%%%%%%%%%%%%%%%%%%%%%%%%%%%%%%%%%%%%%%%%%%%%%%%%%%%%%%%%%%%%%%%%%%%%%%%%%%%%%%
%%%%%%%%%%%%%%%%%%%%%%%%%%%%%%%%%%%%%%%%%%%%%%%%%%%%%%%%%%%%%%%%%%%%%%%%%%%%%%%%%%%%%%%%%
%																Main Thm proof
%%%%%%%%%%%%%%%%%%%%%%%%%%%%%%%%%%%%%%%%%%%%%%%%%%%%%%%%%%%%%%%%%%%%%%%%%%%%%%%%%%%%%%%%%
%%%%%%%%%%%%%%%%%%%%%%%%%%%%%%%%%%%%%%%%%%%%%%%%%%%%%%%%%%%%%%%%%%%%%%%%%%%%%%%%%%%%%%%%%
\section{Proof of the Main Theorem}\label{Proof}
We keep the same notation as in Section \ref{intro}, 
and we start considering the Lie group $\Ng\coloneqq \G \times \G$ endowed with the biinvariant product metric $g \coloneqq g_{\G} \oplus g_{\G}$ together with the harmonic 3-form $H \coloneqq H_\G \oplus H_\G$.   
We denote by $\nabla$ the flat Bismut connection corresponding to $(g,H)$.   
The Lie group $\L\coloneqq \G \times \G$ acts isometrically on $(\Ng,g)$ preserving $H$ as follows 
\[
(g_1,g_2) \cdot (x_1,x_2) = \left(g_1x_1g_2^{-1},g_1x_2g_2^{-1}\right). 
\] 
The involution $\s$ of $\G$ is inner and therefore there exists $z\in \G$ so that $\s$ coincides with the conjugation $\tau_z$. 
The subgroup $\G^\s$ coincides with the centralizer $C\coloneqq C_\G(z) = \left\{a\in\G \st az=za \right\}$, and if we consider the $\L$-orbit through the point $p = (e,z)$ in $\Ng$, 
then the stabilizer $\L_p$ of $p$ is given by
\[
\L_p = C_{\mathrm{diag}} = \left\{(a,a) \st a\in C\right\},
\]
and the orbit is then
\[
\L\cdot p \cong \L/\L_p = (\G\times \G)/C_{\mathrm{diag}} = \M.
\] 
We denote by $\og$ the metric induced on $\M$ as a submanifold of $(\Ng,g)$ and by $\oH$ the pull-back to $\M$ of the $3$-form $H$. 

In order to prove that the pair $(\og,\oH)$ on $\M$ induces a Bismut connection $\onabla$ that is Ricci flat and non-flat, we split the proof into several steps which are dealt with in separate subsections.   

\subsection{Basic facts on the induced metric $\og$}
Let $\gc = \left\{u \in\gg \st {\Ad(z)u}=u \right\}$ denote the Lie algebra of $C$ and let $\gq = \left\{u \in\gg \st {\Ad(z)u}=-u \right\}$ denote its $B$-orthogonal complement in $\gg$. 
We put $k\coloneqq \dim \gc$, $q \coloneqq \dim \gq$, so that $m \coloneqq \dim \M = k+2q$ and $n \coloneqq \dim \Ng = 2k + 2q$.
\smallskip

We can consider the following decomposition of the Lie algebra $\gl = \gg \oplus \gg$ of $\L$
\[
\gl = \gc_{\mathrm{diag}} \oplus \gp \oplus \gm_1 \oplus \gm_2, 
\]
where 
\[
\gc_{\mathrm{diag}} = \{(u,u) \st u\in\gc\},~\gp = \{(u,-u) \st u\in\gc\},~\gm_1 = \{(v,0) \st v\in\gq\},~\gm_2 = \{(0,v) \st v\in\gq\}, 
\]
and there is a natural identification $\gp \oplus \gm_1 \oplus \gm_2 \cong T_p\M$ by means of
\[
V \mapsto \widehat{V}_p = \left.\frac{d}{dt}\right|_{t=0}\exp(tV)\cdot p. 
\] 
 
\smallskip

For every $V=(v_1,v_2)\in \gl$ and $(x_1,x_2)\in N$, we have 
\begin{equation}\label{Vhatgen}
\begin{split}
\widehat{V}_{(x_1,x_2)}	&= \left.\frac{d}{dt}\right|_{t=0} \left( \exp(tv_1)\, x_1 \exp(-tv_2),\exp(tv_1)\, x_2 \exp(-tv_2) \right)\\
					&= \left( dR_{x_1} (v_1) - dL_{x_1} (v_2), dR_{x_2} (v_1) - dL_{x_2} (v_2) \right) \\
					&= \left(\left(v_1^R-v_2^L\right)_{x_1}, \left(v_1^R-v_2^L\right)_{x_2}\right) \eqqcolon \left(\widehat{V}^{(1)}_{x_1},\widehat{V}^{(2)}_{x_2}\right),
\end{split}
\end{equation}
where $v^R$ and $v^L$ denote, respectively, the right-invariant and left-invariant vector field induced by $v\in\gg$ on $\G$, and $\widehat{V}^{(1)}$ and $\widehat{V}^{(2)}$ are the projections of $\widehat{V}$ 
onto the first and second factor of $T\G \times T\G$. 
Moreover, for $V=(v_1,v_2),~W=(w_1,w_2),~U=(u_1,u_2)\in \gl$, we obtain 
\begin{equation}\label{ggen}
\begin{split}
g\left(\widehat{V}_{(x_1,x_2)},\widehat{W}_{(x_1,x_2)} \right)	&= g_\G\left(\widehat{V}^{(1)}_{x_1},\widehat{W}^{(1)}_{x_1}\right) + g_\G\left(\widehat{V}^{(2)}_{x_2},\widehat{W}^{(2)}_{x_2}\right)\\
								&= -B\left(dL_{x_1}^{-1}\widehat{V}^{(1)}_{x_1},dL_{x_1}^{-1}\widehat{W}^{(1)}_{x_1}\right) -B\left(dL_{x_2}^{-1}\widehat{V}^{(2)}_{x_2},dL_{x_2}^{-1}\widehat{W}^{(2)}_{x_2}\right),
\end{split}
\end{equation}
and 
\begin{equation}\label{omegagen}
\begin{split}
H \left(\widehat{V}_{(x_1,x_2)},\widehat{W}_{(x_1,x_2)}, \widehat{U}_{(x_1,x_2)} \right)	
&= H_\G\left(\widehat{V}^{(1)}_{x_1},\widehat{W}^{(1)}_{x_1},\widehat{U}^{(1)}_{x_1}\right) + H_\G\left(\widehat{V}^{(2)}_{x_2},\widehat{W}^{(2)}_{x_2},\widehat{U}^{(2)}_{x_2}\right)\\
&= {-B}\left([dL_{x_1}^{-1}\widehat{V}^{(1)}_{x_1},dL_{x_1}^{-1}\widehat{W}^{(1)}_{x_1}], dL_{x_1}^{-1}\widehat{U}^{(1)}_{x_1}\right) \\
&\quad {-B}\left([dL_{x_2}^{-1}\widehat{V}^{(2)}_{x_2},dL_{x_2}^{-1}\widehat{W}^{(2)}_{x_2}], dL_{x_2}^{-1}\widehat{U}^{(2)}_{x_2}\right). 
\end{split}
\end{equation}
For every subspace $\mathfrak{v}\subseteq \gp \oplus \gm_1 \oplus \gm_2$, we let $\widehat{\mathfrak{v}}|_p \coloneqq \left\{\widehat{V}_p\in T_p\M \st V\in\mathfrak{v} \right\}$. 
The following lemma easily follows from \eqref{Vhatgen}.   
\begin{lemma}\label{TpMsubs} 
At the point $p=(e,z)$, we have 
\[
\widehat{\gp}|_p = \left\{ (2u, dL_{z}(2u)) \st u\in\gc \right\}, \quad \widehat{\gm}_i|_p = \left\{ \left((-1)^{i+1} v, - dL_{z} (v) \right) \st v\in\gq \right\}, \ i=1,2.
\]
\end{lemma}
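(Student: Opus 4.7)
The plan is to derive all three claims by direct substitution into formula \eqref{Vhatgen}, with the only nontrivial input being the relation between left and right translations by $z$ on the eigenspaces of $\Ad(z)$.

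First I would record the following elementary observation. For any $w\in\gg$, the vector field $w^R-0^L = w^R$ satisfies $w^R_e=w^L_e=w$; evaluated at $z$, we have $w^R_z=dR_z(w)$ and $w^L_z=dL_z(w)$. Moreover, since $\s=\tau_z=L_z\circ R_{z^{-1}}$ as an automorphism of $\G$, its differential at $e$ equals $\Ad(z)=dL_z\circ dR_{z^{-1}}$. Consequently, $\Ad(z)w=w$ (i.e.\ $w\in\gc$) forces $dR_z(w)=dL_z(w)$, while $\Ad(z)w=-w$ (i.e.\ $w\in\gq$) forces $dR_z(w)=-dL_z(w)$. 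This is the only geometric ingredient needed.

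Next I would treat each summand separately. For $V=(u,-u)\in\gp$ with $u\in\gc$, formula \eqref{Vhatgen} gives $\widehat V_p=\bigl((u^R+u^L)_e,(u^R+u^L)_z\bigr)$; the first component collapses to $2u$ and the second to $dR_z(u)+dL_z(u)=2\,dL_z(u)$ by the observation above, giving the claimed form of $\widehat\gp|_p$. For $V=(v,0)\in\gm_1$ with $v\in\gq$, \eqref{Vhatgen} yields $\widehat V_p=(v^R_e,v^R_z)=(v,dR_z(v))$, and since $v\in\gq$ we have $dR_z(v)=-dL_z(v)$, matching the stated expression with sign $(-1)^{1+1}$. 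For $V=(0,v)\in\gm_2$ the formula gives $\widehat V_p=(-v^L_e,-v^L_z)=(-v,-dL_z(v))$, which is the $i=2$ case.

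There is no serious obstacle here; the lemma is essentially a bookkeeping exercise. The only point requiring care is the sign computation, where one must correctly identify how $\Ad(z)=\pm\mathrm{Id}$ on $\gc$, respectively $\gq$, translates into an identity between $dL_z$ and $dR_z$ on these subspaces. Once this is isolated as a preliminary remark, the three cases reduce to one-line substitutions.
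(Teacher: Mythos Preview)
Your proposal is correct and follows essentially the same approach as the paper: both proofs plug directly into \eqref{Vhatgen} and reduce the second component via the identity $dR_z(w)=dL_z(\Ad(z^{-1})w)$, which on the $\pm 1$-eigenspaces $\gc$ and $\gq$ of $\Ad(z)$ yields $dR_z=\pm dL_z$. The only cosmetic difference is that you isolate this relation as a preliminary remark, whereas the paper applies it inline in each case.
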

\begin{proof} 
If $V=(u,-u) \in \gp$, then using $\Ad(z^{-1})|_{\gc} = \mathrm{Id}_{\gc}$, we get
\[
\begin{split}
\widehat{V}_{(e,z)}	&= \left(2u, dR_{z} (u) - dL_{z}(-u) \right) =  \left(2u, dL_{z} (\Ad(z^{-1}) u + u) \right) = (2u, dL_{z}(2u)). 
\end{split}
\]
If $V=(v,0) \in \gm_1$, then 
\[
\begin{split}
\widehat{V}_{(e,z)}	&=  \left( v, dR_{z} (v) \right) =   \left( v, dL_{z} \Ad(z^{-1}) v  \right) = \left( v, - dL_{z} (v) \right),
\end{split}
\]
since $\Ad(z^{-1})|_{\gq} = -\mathrm{Id}_{\gq}$. 
Finally, if $V=(0,v) \in \gm_2$, then
\[
\begin{split}
\widehat{V}_{(e,z)}	&= \left( - v, - dL_{z} (v) \right). 
\end{split}
\]

\end{proof}

From \eqref{ggen}, we see that the Riemannian metric $g$ at the point $p = (e,z)$ is given by 
\[
g_{(e,z)}\left(\widehat{V}_{(e,z)},\widehat{W}_{(e,z)} \right)	= -B\left(\widehat{V}^{(1)}_{e},\widehat{W}^{(1)}_{e}\right) -B\left(dL_{z}^{-1}\widehat{V}^{(2)}_{z},dL_{z}^{-1}\widehat{W}^{(2)}_{z}\right), 
\]
and we obtain the following.

\begin{lemma}
The decomposition $T_p\M = \widehat{\gp}|_p\oplus  \widehat{\gm}_1|_p\oplus  \widehat{\gm}_2|_p$ is $\overline{g}$-orthogonal. Moreover, 
\begin{enumerate}[a)]
\item\label{LemMeta} if $V=(u,-u),~W=(y,-y) \in \gp$, then $\og(\widehat{V},\widehat{W})_{(e,z)}  = -8\,B(u,y)$;
\item\label{LemMetb} if $V=(v,0),~W=(w,0) \in \gm_1$, then $\og(\widehat{V},\widehat{W} )_{(e,z)} = -2\,B(v,w)$; 
\item\label{LemMetc} if $V=(0,v),~W=(0,w) \in \gm_2$, then $\og(\widehat{V},\widehat{W} )_{(e,z)} = -2\,B(v,w)$.  
\end{enumerate}
Finally, the $g$-orthogonal complement of $T_p\M \subset T_p\Ng$ is given by
\[
T_p\M^\perp = \left\{(u,-dL_z(u)) \st u \in\gc \right\}.
\]
\end{lemma}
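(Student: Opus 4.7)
The plan is to substitute the explicit descriptions of $\widehat{V}_p$ provided by Lemma \ref{TpMsubs} into the formula for $g_{(e,z)}$ displayed just above the statement and read off each inner product. The only ingredient beyond direct substitution is the $B$-orthogonality $B(\gc,\gq)=0$, which follows from $\Ad$-invariance of $B$ together with the fact that $\gc$ and $\gq$ are the $\pm 1$-eigenspaces of $\Ad(z)$.

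For the three diagonal equalities (a)--(c) I would simply plug in. For instance, in case (a) Lemma \ref{TpMsubs} gives $\widehat V_p = (2u, dL_z(2u))$ and $\widehat W_p = (2y, dL_z(2y))$, hence
\[
g_{(e,z)}(\widehat V, \widehat W) = -B(2u,2y) - B\bigl(dL_z^{-1}dL_z(2u),\, dL_z^{-1}dL_z(2y)\bigr) = -8\,B(u,y).
\]
Cases (b) and (c) go through in exactly the same way, with the $(-1)^{i+1}$ signs from Lemma \ref{TpMsubs} squaring out because each pairing involves two vectors lying in the same $\gm_i$. Orthogonality of the three summands of $T_p\M$ is then verified analogously: each of the pairings $\widehat\gp|_p$-vs-$\widehat\gm_i|_p$ and $\widehat\gm_1|_p$-vs-$\widehat\gm_2|_p$ reduces either to a multiple of $B(u,v)$ with $u\in\gc$ and $v\in\gq$, which vanishes by the remark above, or to two terms of opposite sign that cancel.

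For the orthogonal complement I would take the candidate subspace $V \coloneqq \{(u, -dL_z(u)) \st u\in\gc\} \subset T_p\Ng$, which has dimension $k=\dim\gc$. A dimension count gives
\[
\dim T_p\Ng - \dim T_p\M = 2(k+q) - (k+2q) = k,
\]
so it is enough to check that each element of $V$ is $g_p$-orthogonal to each of $\widehat\gp|_p$, $\widehat\gm_1|_p$, $\widehat\gm_2|_p$. Each such check is a single-line computation using bi-invariance of $g$ together with $B(\gc,\gq)=0$. There is no conceptual obstacle here: the whole proof is a direct substitution, the only real care being sign bookkeeping through the $(-1)^{i+1}$ factor and the identity $\Ad(z^{-1})|_\gq = -\mathrm{Id}_\gq$.
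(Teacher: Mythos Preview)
Your proposal is correct and follows essentially the same direct-substitution approach as the paper. The only minor difference is that the paper establishes $\widehat{\gp}|_p \perp \widehat{\gm}_i|_p$ via the isometric action of $\Ad_{\L}((z,z))$ (which fixes $\gp$ and acts as $-\mathrm{Id}$ on $\gm_i$) rather than by invoking $B(\gc,\gq)=0$, but this is cosmetic; the remaining computations and the dimension count for $T_p\M^{\perp}$ are identical.
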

\begin{proof}  
We first note that $\alpha \coloneqq \Ad_\L((z,z))$ acts isometrically on $\gp\oplus\gm_1\oplus\gm_2$ with $\alpha|_{\gp}=\rm{Id},\alpha|_{\gm_i}=-\rm{Id}$, 
so that $\og\left(\widehat{\gp}|_p,\widehat{\gm}_i|_p\right)=0$, for $i=1,2$. 
As for \ref{LemMeta}), we have 
\[
g_{(e,z)}\left(\widehat{V}_{(e,z)},\widehat{W}_{(e,z)} \right)	= -B\left(2u,2y\right) -B\left(2u,2y\right) = -8\,B(u,y). 
\]
\ref{LemMetb}) is proved as follows   
\[
g_{(e,z)}\left(\widehat{V}_{(e,z)},\widehat{W}_{(e,z)} \right)	=  -B\left(v,w\right) -B\left(-v,-w\right) = -2\,B(v,w), 
\]
and \ref{LemMetc}) is proved similarly. Finally, if $V=(v,0) \in \gm_1$ and $W=(0,w) \in \gm_2$, then 
\[
g_{(e,z)}\left(\widehat{V}_{(e,z)},\widehat{W}_{(e,z)} \right)	=   -B\left(v,-w\right) -B\left(-v,-w\right) = 0. 
\]
The last claim follows from Lemma \ref{TpMsubs} and ${\rm{codim}}_\Ng \M = \dim \gc$. 
\end{proof}

\subsection{The second fundamental form of the immersion and the curvature of M}\label{SecII}   
We begin stating some general facts on the induced Bismut connection on submanifolds, giving a formula for its curvature and the relative Ricci tensor.

Let $(\Ng,g)$ be a Riemannian manifold of dimension $n = m+k$, consider a 3-form $H\in \Omega^3(\Ng)$ and let $\nabla$ be the Bismut connection associated with the pair $(g,H)$. 
The torsion tensor $T$ of $\nabla$ is related to $H$ via the identity $H(X,Y,Z) = g(T_XY,Z)$, for all $X,Y,Z\in\Gamma(T\Ng)$, and thus 
\[
\nabla_XY = \nabla^g_XY +\frac12 T_XY. 
\]
Let $\M$ be an $m$-dimensional submanifold of $\Ng$, denote by $\iota : \M \rightarrow \Ng$ the corresponding injective immersion and by $\og = \iota^*g$ the Riemannian metric induced by $g$.
If $\nu\M$ denotes the normal bundle over $\M$, then   
for any pair of vector fields $X,Y\in\Gamma(T\M)$ arbitrarily extended to $\Ng$, we have 
\[
\nabla^g_XY = \onabla^{\og}_XY + h(X,Y),  
\]
where $h\in \Gamma(S^2(T^*\M)\otimes\nu\M)$ denotes the second fundamental form of $\M$ and $\onabla^{\og}$ is the Levi Civita connection of $\og$.

The submanifold $\M$ has a natural Bismut connection $\onabla$ induced by the pair $(\og,\oH=\iota^*H)$. It is related to the Bismut connection $\nabla$ on $\Ng$ as follows
\[
\nabla_XY = \onabla_XY + h(X,Y) + \frac12(T_XY)^\perp, 
\]
where $ (T_XY)^\perp$ denotes the normal component of $T_XY$. 

We can now determine the relation between the curvature tensor $R^\nabla$ of $\nabla$ and the curvature tensor $R^{\onabla}$ of $\onabla$. 
At each point $p$ of $\M$, we consider a $g$-orthonormal basis $(\xi_1,\ldots,\xi_k)$ of the normal space $\nu_p\M$, and a standard computation shows that for $X,Y,Z,U\in T_p\M$ 
\[
\begin{split}
g(R^\nabla_{X,Y}Z,U)	&=	g(\nabla_X\nabla_YZ - \nabla_Y\nabla_XZ - \nabla_{[X,Y]}Z,U) \\
					&=	\og\left(R^{\onabla}_{X,Y}Z,U\right)  - \left(h_i(Y,Z) +\frac12 g(T_YZ,\xi_i) \right) \left( h_i(X,U)  +\frac12 g(\xi_i,T_XU) \right)\\
					&\quad + \left(h_i(X,Z) +\frac12 g(T_XZ,\xi_i) \right) \left( h_i(Y,U)  +\frac12 g(\xi_i,T_YU) \right).
\end{split}
\]
As for the relation between the Ricci tensor $\Ric^\nabla$ of $\nabla$ and the Ricci tensor $\Ric^{\onabla}$ of $\onabla$, we consider a $\og$-orthonormal basis $(e_1,\ldots,e_m)$ of $T_p\M$ and we compute 
\[
\begin{split}
\Ric^\nabla(Y,Z)	&=	g\left(R^\nabla_{e_j,Y}Z,e_j\right)\\
				&=	\Ric^{\onabla}(Y,Z)  - \left(h_i(Y,Z) +\frac12 g(T_YZ,\xi_i) \right) \left( h_i(e_j,e_j)  +\frac12 g(\xi_i,T_{e_j} e_j) \right)\\
				&\quad + \left(h_i(e_j,Z) +\frac12 g(T_{e_j}Z,\xi_i) \right) \left( h_i(Y,e_j)  +\frac12 g(\xi_i,T_Y{e_j}) \right)\\
				&=	\Ric^{\onabla}(Y,Z)  - g\left(h(Y,Z) +\frac12 T_YZ, h(e_j,e_j) \right) + g\left(h(Y,e_j),h(Z,e_j)\right)   \\
				&\quad + \frac12\, g\left(h(Z,e_j), T_Y{e_j} \right) + \frac12\, g\left(h(Y,e_j), T_{e_j}Z \right) -\frac14 g(T_Ye_j,\xi_i)g(T_Z{e_j},\xi_i). 
\end{split}
\]

Using this last expression, and recalling that $H(X,Y,Z) = g(T_XY,Z)$, we obtain the following. 
\begin{proposition}\label{RicBM}
If the Bismut connection $\nabla$ on $\Ng$ is flat, then for any $Y,Z\in T_p\M$ we have
\[
\begin{split}
\Ric^{\onabla}(Y,Z)  	&=  g\left(h(Y,Z), \mu \right) - g\left(h(Y,e_j),h(Z,e_j)\right)  + \frac14\,H(Y,e_j,\xi_i)\,H(Z,{e_j},\xi_i) \\
				&\quad +\frac12 H( Y,Z,\mu ) + \frac12\, h_i(Y,e_j)\, H(Z, e_j,\xi_i ) - \frac12\, h_i(Z,e_j)\, H(Y, e_j,\xi_i ),   
\end{split}
\]
where $(e_1,\ldots,e_m)$ is a $\og$-orthonormal basis of $T_p\M$ and $\mu\coloneqq h(e_j,e_j)$ is the mean curvature vector of $\M.$
\end{proposition}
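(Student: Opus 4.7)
The plan is essentially algebraic and follows immediately from the general formula relating $\Ric^\nabla$ and $\Ric^{\onabla}$ that is derived in the paragraph preceding the statement. Since $\nabla$ is assumed flat, one has $R^\nabla \equiv 0$ and therefore $\Ric^\nabla \equiv 0$, so the idea is to specialize that formula by zeroing the left-hand side and to solve for $\Ric^{\onabla}(Y,Z)$.

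Concretely, I would first set $\Ric^\nabla(Y,Z) = 0$ in the displayed identity and transpose every other term to the right-hand side, thereby isolating $\Ric^{\onabla}(Y,Z)$. This yields an expression in which $\Ric^{\onabla}(Y,Z)$ equals a sum of: the Gauss-type term $g(h(Y,Z),\mu)$, the torsion-mean-curvature contribution $\tfrac12 g(T_Y Z,\mu)$, the negative Gauss contraction $-g(h(Y,e_j),h(Z,e_j))$, the two mixed terms $-\tfrac12 g(h(Z,e_j),T_Y e_j)$ and $-\tfrac12 g(h(Y,e_j),T_{e_j}Z)$, and the purely torsion term $\tfrac14 g(T_Y e_j,\xi_i)\,g(T_Z e_j,\xi_i)$.

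Secondly, I would translate each torsion factor into a value of the three-form $H$ through the defining identity $H(X,Y,Z) = g(T_X Y,Z)$. The replacements $g(T_Y Z,\mu) = H(Y,Z,\mu)$ and $g(T_Y e_j,\xi_i) = H(Y,e_j,\xi_i)$ are immediate. For the mixed term $g(h(Y,e_j),T_{e_j}Z)$, one expands $h(Y,e_j) = h_i(Y,e_j)\,\xi_i$ with respect to the orthonormal normal frame, so that the inner product becomes $h_i(Y,e_j)\,H(e_j,Z,\xi_i)$; total skew-symmetry of $H$ then converts $H(e_j,Z,\xi_i)$ into $-H(Z,e_j,\xi_i)$, which accounts for the sign flip appearing in the last term of the statement. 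The remaining mixed term is handled by the same substitution directly.

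There is no serious obstacle here: the whole argument is a rearrangement of an identity already established in the excerpt, and the only place where care is needed is in tracking the signs produced by the total skew-symmetry of $H$ when swapping arguments.
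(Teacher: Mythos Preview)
Your proposal is correct and follows exactly the same route as the paper: set $\Ric^\nabla=0$ in the identity derived just before the proposition, isolate $\Ric^{\onabla}$, and rewrite each torsion term via $H(X,Y,Z)=g(T_XY,Z)$, using the skew-symmetry of $H$ to fix the sign on the $h_i(Y,e_j)\,H(Z,e_j,\xi_i)$ term. The paper in fact only records the one-line justification ``using this last expression, and recalling that $H(X,Y,Z)=g(T_XY,Z)$'', so your write-up is slightly more detailed but otherwise identical.
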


We now go back to the proof of the Main Theorem, considering $\M =  (\G\times \G)/C_{\mathrm{diag}}$ and $\Ng=\G\times\G$. 
In the following, we compute the second fundamental form $h$ of the immersion 
$\iota:\M\hookrightarrow \Ng$. 

\smallskip

We begin with some preliminary observations. Some well-known facts are summarized in the next lemma. 
\begin{lemma}\label{prelem1}
Let $v,w\in\gg$ and denote by $v^R$ and $v^L$ the right-invariant and left-invariant vector fields induced by $v$ on $\G$, respectively. Then
\begin{enumerate}[1)]  \setlength\itemsep{0.2cm}
\item\label{prelem11} $[v^R,w^L]=0$;
\item\label{prelem12} $[v^R,w^R] = -[v,w]^R$;
\item\label{prelem13} $\nabla^{g_\G}_{v^L}w^L = \frac12 [v,w]^L$.
\end{enumerate}
\end{lemma}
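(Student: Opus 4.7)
My plan is to dispatch each of the three classical identities by a short standard computation, treating them in order.

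For \ref{prelem11}, I would note that the flow of the left-invariant field $w^L$ starting at $g\in\G$ is $t\mapsto g\exp(tw)=R_{\exp(tw)}(g)$, whereas the flow of the right-invariant field $v^R$ is $t\mapsto \exp(tv)\,g=L_{\exp(tv)}(g)$. Associativity of group multiplication gives $L_a\circ R_b=R_b\circ L_a$ on $\G$, so these two one-parameter families of diffeomorphisms commute, and hence so do their infinitesimal generators.

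For \ref{prelem12}, both sides are right-invariant vector fields (since $R_g$-related pairs of fields have $R_g$-related Lie brackets), so it suffices to check the identity at $e\in\G$. Using the inversion $\iota\colon \G\to\G$, $\iota(h)=h^{-1}$, a direct computation yields $\iota_* v^L=-v^R$, and since $\iota_*$ preserves Lie brackets of vector fields,
\begin{equation*}
[v^R,w^R]=[\iota_*(-v^L),\iota_*(-w^L)]=\iota_*[v^L,w^L]=\iota_*([v,w]^L)=-[v,w]^R.
\end{equation*}

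For \ref{prelem13}, I would substitute $X=v^L$, $Y=w^L$, $Z=u^L$ into Koszul's formula. The three directional-derivative terms vanish because $g_\G(v^L,w^L)$ is a constant function by left-invariance of $g_\G$. Since $g_\G=-B$ is bi-invariant, the $\ad$-invariance identity $g_\G([x,y],z)=-g_\G(y,[x,z])$ on $\gg$ forces $g_\G([v,u],w)+g_\G([w,u],v)=0$, leaving $2g_\G(\nabla^{g_\G}_{v^L}w^L,u^L)=g_\G([v,w]^L,u^L)$. Since $u$ is arbitrary, \ref{prelem13} follows. The only delicate point across the three items is tracking the signs in \ref{prelem12}, where the left/right convention gap shows up; otherwise there is no real obstacle.
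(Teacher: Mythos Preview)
Your proposal is correct. The paper itself does not prove this lemma at all: it introduces it with the phrase ``Some well-known facts are summarized in the next lemma'' and moves on without argument. Your three short computations are the standard ones and are sound; the commuting-flows argument for \ref{prelem11}, the inversion trick $\iota_*v^L=-v^R$ for \ref{prelem12}, and the Koszul-plus-$\ad$-invariance reduction for \ref{prelem13} are exactly what one would supply if asked to justify these identities. There is nothing to compare against in the paper beyond noting that the authors regard the lemma as folklore, which your write-up confirms.
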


As for the Levi Civita connection $\nabla^g$ at $p=(e,z)$, for every pair of vectors $V=(v_1,v_2), W=(w_1,w_2)\in\gl$, we have
\[
\begin{split}
\nabla^g_{\widehat{V}}\widehat{W}_{(e,z)} 	&= \left(\nabla^{g_\G}_{v_1^R-v_2^L}\left(w_1^R - w_2^L\right)_e, \nabla^{g_\G}_{v_1^R-v_2^L}\left(w_1^R - w_2^L\right)_z\right)\\
									&= \left(\left.\nabla^{g_\G}_{v_1^R}w_1^R + \nabla^{g_\G}_{v_2^L} w_2^L - \nabla^{g_\G}_{v_1^R}w_2^L - \nabla^{g_\G}_{v_2^L}w_1^R\right|_{e}, 
										\left.\nabla^{g_\G}_{v_1^R}w_1^R + \nabla^{g_\G}_{v_2^L} w_2^L - \nabla^{g_\G}_{v_1^R}w_2^L - \nabla^{g_\G}_{v_2^L}w_1^R\right|_{z} \right). 						
\end{split}
\]
Some useful identities are collected in the next result. 
\begin{lemma}\label{prelem2}
Let  $v,w,u\in\gg$ and let $x\in\G$, then 
\begin{enumerate}[1)]
\item $g_\G\left(\nabla^{g_\G}_{v^R}w^L,u^L\right)_x = -\frac12 B([w,u],\Ad(x^{-1})v) = g_\G\left(\nabla^{g_\G}_{w^L}v^R,u^L\right)_x$;
\item $g_\G\left(\nabla^{g_\G}_{v^R}w^R,u^L\right)_x = \frac12 B([v,w],\Ad(x) u)$.  
\end{enumerate}
\end{lemma}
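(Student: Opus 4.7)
The plan is to apply the Koszul formula to each covariant derivative, using two recurring ingredients: the bi-invariance of $g_\G = -B$ makes $g_\G(v^L, w^L)$ and $g_\G(v^R, w^R)$ constant on $\G$, while the mixed pairing is
\[
g_\G(v^R, w^L)_x = -B(\Ad(x^{-1})v, w),
\]
since $v^R_x = dL_x(\Ad(x^{-1})v)$ and $g_\G$ is left-invariant. I would also freely use the $\Ad$-invariance identity $B([a,b], c) = -B(b, [a,c])$.

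For the first identity, I would expand
\[
2 g_\G(\nabla^{g_\G}_{v^R} w^L, u^L) = v^R g_\G(w^L,u^L) + w^L g_\G(v^R,u^L) - u^L g_\G(v^R,w^L) + g_\G([v^R,w^L],u^L) - g_\G([v^R,u^L],w^L) - g_\G([w^L,u^L],v^R).
\]
The first term vanishes by constancy, and the two middle commutator terms vanish by \ref{prelem11}) of Lemma \ref{prelem1}. The two remaining derivative terms $w^L|_x g_\G(v^R, u^L)$ and $-u^L|_x g_\G(v^R, w^L)$ are computed by differentiating $y \mapsto -B(\Ad(y^{-1})v, \cdot)$ along the flows of $w^L$ and $u^L$, which brings out brackets of $w$ and $u$ with $\Ad(x^{-1})v$; after uniformly applying $\Ad$-invariance, those two terms together with the final term $-g_\G([w,u]^L, v^R)_x$ become scalar multiples of $B(\Ad(x^{-1})v, [u,w])$ that collapse to $B(\Ad(x^{-1})v, [u,w]) = -B([w,u], \Ad(x^{-1})v)$, giving the claim after dividing by $2$. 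The symmetry $g_\G(\nabla^{g_\G}_{v^R} w^L, u^L) = g_\G(\nabla^{g_\G}_{w^L} v^R, u^L)$ is then immediate from the torsion-freeness of $\nabla^{g_\G}$ together with $[v^R, w^L] = 0$.

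For the second identity, the key observation is that right-invariant vector fields are Killing for the bi-invariant metric, so the same Koszul calculation that proves \ref{prelem13}) of Lemma \ref{prelem1} applies verbatim with every $v^L$ replaced by $v^R$. Together with \ref{prelem12}), this yields
\[
\nabla^{g_\G}_{v^R} w^R = \tfrac12 [v^R, w^R] = -\tfrac12 [v,w]^R.
\]
Pairing against $u^L$ via the pointwise formula above and invoking $\Ad$-invariance of $B$ then gives
\[
g_\G(\nabla^{g_\G}_{v^R} w^R, u^L)_x = \tfrac12 B(\Ad(x^{-1})[v,w], u) = \tfrac12 B([v,w], \Ad(x)\,u),
\]
as desired. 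The only real obstacle is the bookkeeping of signs in the Koszul expansion for the first identity; every other step is either a direct computation or an application of an already-established identity.
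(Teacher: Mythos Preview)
Your proposal is correct and follows essentially the same Koszul-formula strategy as the paper, using the vanishing of $[v^R,w^L]$ and the constancy of $g_\G(w^L,u^L)$ exactly as they do. The only cosmetic differences are that the paper handles the two surviving derivative terms in part~1 via the Killing-field identity for $w^L$ and $u^L$ rather than by directly differentiating $y\mapsto -B(\Ad(y^{-1})v,\cdot)$, and in part~2 it expands Koszul against $u^L$ directly instead of first identifying $\nabla^{g_\G}_{v^R}w^R=-\tfrac12[v,w]^R$; both routes arrive at the same expressions.
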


\begin{proof}\ 
\begin{enumerate}[1)]
\item We have
\[
\begin{split}
2 g_\G\left(\nabla^{g_\G}_{v^R}w^L,u^L\right)	&= v^R g_\G(w^L,u^L) + w^L g_\G(u^L,v^R) - u^L g_\G(v^R,w^L)\\
									&\quad -g_\G(w^L,[v^R,u^L]) - g_\G(u^L,[w^L,v^R]) + g_\G(v^R,[u^L,w^L])\\
									&=  w^L g_\G(u^L,v^R) - u^L g_\G(v^R,w^L) + g_\G(v^R,[u^L,w^L]),
\end{split}
\]
where we used that $g_\G(w^L,u^L)$ is constant and $[v^R,u^L]= 0 = [w^L,v^R]$. Now, both $w^L$ and $u^L$ are Killing fields for the biinvariant metric $g_\G$, so we have 
\[
\begin{split}
w^L g_\G(u^L,v^R)	&= g_\G([w^L,u^L],v^R) + g_\G(u^L,[w^L,v^R]) = g_\G([w^L,u^L],v^R),\\
u^L g_\G(v^R,w^L)	&= g_\G([u^L,v^R],w^L) + g_\G(v^R,[u^L,w^L]) =  -  g_\G([w^L,u^L],v^R). 
\end{split}
\]
Therefore 
\[
2 g_\G\left(\nabla^{g_\G}_{v^R}w^L,u^L\right)_x = g_\G([w^L,u^L],v^R)_x = g_\G(dL_x[w,u],dR_xv) = g_\G([w,u],\Ad(x^{-1})v)_e. 
\]
\item With similar computations as in the previous point, we have 
\[
\begin{split}
2g_\G\left(\nabla^{g_\G}_{v^R}w^R,u^L\right) 	&= v^R g_\G(w^R,u^L) + w^R g_\G(u^L,v^R) - g_\G(u^L,[w^R,v^R])\\
									&= g_\G([v^R,w^R],u^L) = - g_\G([v,w]^R,u^L), 
\end{split}
\]
thus 
\[
2g_\G\left(\nabla^{g_\G}_{v^R}w^R,u^L\right)_x = -g_\G(dR_x[v,w],dL_x u) = - g_\G([v,w],\Ad(x)u)_e. 
\]
\end{enumerate}
\end{proof}

Let $(\kappa_1,\ldots,\kappa_k)$ be a $B$-orthogonal basis of $\gc$ such that the vectors $\xi_i \coloneqq (\kappa_i,-dL_z\kappa_i)$, $i=1,\ldots,k$, form a $g$-orthonormal basis of $\nu_p\M$. 
Then, using \ref{prelem13}) of Lemma \ref{prelem1}, Lemma \ref{prelem2}, and recalling that $g_\G$ is biinvariant and $\Ad(z)\kappa_i = \kappa_i$, we obtain
\[
\begin{split}
h_i(\widehat{V},\widehat{W})_{(e,z)}	&= g\left(\nabla^g_{\widehat{V}}\widehat{W}, \xi_i\right)_{(e,z)}\\
							&= g_\G\left(\left.\nabla^{g_\G}_{v_1^R}w_1^R + \nabla^{g_\G}_{v_2^L} w_2^L - \nabla^{g_\G}_{v_1^R}w_2^L - \nabla^{g_\G}_{v_2^L}w_1^R\right|_{e}, \kappa_i\right)\\
							&\quad +g_\G\left(\left.\nabla^{g_\G}_{v_1^R}w_1^R + \nabla^{g_\G}_{v_2^L} w_2^L - \nabla^{g_\G}_{v_1^R}w_2^L - \nabla^{g_\G}_{v_2^L}w_1^R\right|_{z},-dL_z\kappa_i \right)\\
							&= -\frac12g_\G\left([v_1,w_1] - [v_2,w_2],\kappa_i\right)  - \frac12 g_\G\left([w_2,\kappa_i],v_1\right) - \frac12 g_\G\left([v_2,\kappa_i],w_1\right)\\
							&\quad +\frac12g_\G\left([v_1,w_1]- [v_2,w_2],\kappa_i\right) + \frac12 g_\G\left([w_2,\kappa_i],\Ad(z^{-1})v_1\right) 
								+ \frac12 g_\G\left([v_2,\kappa_i],\Ad(z^{-1})w_1\right)\\
							&= - \frac12 g_\G\left([w_2,\kappa_i],v_1 -\Ad(z^{-1})v_1 \right) - \frac12 g_\G\left([v_2,\kappa_i],w_1- \Ad(z^{-1})w_1\right). 
\end{split}
\]
Using this expression, we deduce the following.
\begin{proposition}\label{hexpr}
Let $\widehat{V},\widehat{W}\in T_p\M$. Then, $h(\widehat{V},\widehat{W}) = h_i(\widehat{V},\widehat{W}) \xi_i$ may be non-zero only when $V = (v_1,0)\in\gm_1$ and $W=(0,w_2)\in\gm_2$. In such a case 
\[
h_i(\widehat{V},\widehat{W})_p	=  B([v_1,w_2],\kappa_i).   
\]
In particular, the mean curvature vector $\mu$ of $\M$ is identically zero, i.e., $\M$ is a minimal submanifold of $\Ng.$ 
\end{proposition}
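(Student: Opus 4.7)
The plan is to apply the general formula for $h_i(\widehat{V},\widehat{W})_{(e,z)}$ displayed immediately above the proposition, and to carry out a case analysis on $(V,W)$ according to the decomposition $T_p\M \cong \widehat{\gp}|_p\oplus\widehat{\gm}_1|_p\oplus\widehat{\gm}_2|_p$. The three inputs I would use are: $\Ad(z^{-1})$ acts as the identity on $\gc$ and as $-\mathrm{Id}$ on $\gq$, so that $v_1 - \Ad(z^{-1})v_1$ equals $0$ for $v_1\in\gc$ and $2v_1$ for $v_1\in\gq$; the bracket $[\gc,\gc]$ lies in $\gc$, which is $B$-orthogonal to $\gq$; and a vector in $\gm_1$ has zero second component while a vector in $\gm_2$ has zero first component.

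First I would verify that $h_i$ vanishes whenever $(V,W)$ is not of the form $\gm_1\times\gm_2$ or $\gm_2\times\gm_1$. If $V\in\gp$, the first term of the formula is killed by $v_1\in\gc$, while in the second term $[v_2,\kappa_i]\in[\gc,\gc]\subseteq\gc$ is $B$-orthogonal to the $\gq$-part of $w_1-\Ad(z^{-1})w_1$; this disposes of all $W\in\gp\oplus\gm_1\oplus\gm_2$, and a symmetric argument handles $W\in\gp$. The diagonal cases $(V,W)\in\gm_1\times\gm_1$ and $(V,W)\in\gm_2\times\gm_2$ vanish directly, the first because $v_2=w_2=0$, the second because $v_1=w_1=0$. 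In the surviving case $V=(v_1,0)\in\gm_1$, $W=(0,w_2)\in\gm_2$ with $v_1,w_2\in\gq$, the formula collapses to
\[
h_i(\widehat{V},\widehat{W})_{(e,z)} \,=\, -g_\G([w_2,\kappa_i],v_1) \,=\, B([w_2,\kappa_i],v_1),
\]
and $\ad$-invariance of $B$ rewrites the right-hand side as $B([v_1,w_2],\kappa_i)$, giving the claimed expression.

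For the mean curvature vector, the plan is to choose a $\og$-orthonormal basis $(e_1,\dots,e_m)$ of $T_p\M$ adapted to the orthogonal splitting $\widehat{\gp}|_p\oplus\widehat{\gm}_1|_p\oplus\widehat{\gm}_2|_p$. Each basis vector lies in a single summand, and the case analysis above shows that $h$ vanishes on the diagonal of each summand; hence $h(e_j,e_j)=0$ term by term and $\mu_p=0$. Because $\L$ acts on $\Ng$ by isometries preserving $\M$, invariance of the mean curvature under this transitive action propagates the vanishing to every point of $\M$. I do not foresee a substantive obstacle: once the decomposition of $\gl$ is in hand, the argument is a bookkeeping computation on each block, combined with the $B$-orthogonality of $\gc$ and $\gq$ and the $\ad$-invariance of the Killing form.
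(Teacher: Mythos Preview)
Your proposal is correct and follows essentially the same route as the paper: both apply the displayed formula for $h_i(\widehat V,\widehat W)_{(e,z)}$ and run through the block decomposition $\gp\oplus\gm_1\oplus\gm_2$, using that $\Ad(z^{-1})$ is $\mathrm{Id}$ on $\gc$ and $-\mathrm{Id}$ on $\gq$ together with $[\gc,\gc]\subseteq\gc\perp_B\gq$ to kill all blocks except $\gm_1\times\gm_2$, where the $\ad$-invariance of $B$ gives the stated value. Your explicit justification of $\mu\equiv 0$ via an adapted orthonormal basis and transitivity of the isometric $\L$-action is a point the paper leaves implicit, but it is the intended argument.
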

\begin{proof} Let $V=(v_1,v_2),~W=(w_1,w_2) \in \gp\oplus\gm_1\oplus\gm_2$. We discuss each relevant case separately: 
\begin{enumerate}[$\bullet$]
\item if $V,W\in\gp$, then $v_2=-v_1 \in \gc$, $w_2 = -w_1 \in \gc$  and $\Ad(z^{-1})|_{\gc} = \mathrm{Id}_{\gc}$, whence 
\[
h_i(\widehat{V},\widehat{W})_{(e,z)}	= 0;
\]
\item if $V\in\gp$ and $W\in\gm_1$, then $v_2=-v_1\in\gc$, $w_1\in\gq$ and $w_2=0$, whence
\[
h_i(\widehat{V},\widehat{W})_{(e,z)}	= g_\G\left([v_1,\kappa_i],w_1\right)_e = 0, 
\]
since $[v_1,\kappa_i]\in\gc$. The analogous conclusion holds when $V\in\gp$ and $W\in\gm_2$; 
\item if $V,W\in\gm_1$, then $v_1,w_1\in\gq$ and $v_2 = 0 = w_2$, whence
\[
h_i(\widehat{V},\widehat{W})_{(e,z)}	= 0. 
\]
Similarly, one has $h_i(\widehat{V},\widehat{W})_{(e,z)}= 0$ when $V,W\in\gm_2$;
\item if $V\in\gm_1$ and $W\in\gm_2$, then $v_1\in\gq$, $v_2=0$ and $w_1=0$, $w_2\in\gq$, whence 
\[
h_i(\widehat{V},\widehat{W})_{(e,z)}	= - g_\G\left([w_2,\kappa_i],v_1  \right)_e = B\left([w_2,\kappa_i],v_1  \right) =  B([v_1,w_2],\kappa_i). 
\]
\end{enumerate}
\end{proof}

\subsection{The induced Bismut connection on M is Ricci flat and non-flat} 
We begin describing the restriction of the $2$-forms $\imath_{\xi_i}H$ to $T_p\M\times T_p\M$.  
\begin{lemma}\label{xiH}
For every $i=1,\ldots,k,$ the $2$-form $\imath_{\xi_i}H$ satisfies the following:
\begin{enumerate}[1)] \setlength\itemsep{0.1cm}
\item\label{xiH1} $\imath_{\xi_i}H\left(\widehat{Y},\cdot\right) = 0$, whenever $Y\in\gp$;
\item\label{xiH2}  $\left.\imath_{\xi_i}H \right|_{\widehat{\gm}_j|_p\times \widehat{\gm}_j|_p}= 0$, for $j=1,2$;
\item\label{xiH3} for $Y=(y_1,0)\in\gm_1$ and $Z=(0,z_2)\in\gm_2$, $\imath_{\xi_i}H\left(\widehat{Y},\widehat{Z}\right) = 2B([y_1,z_2],\kappa_i)$. 
\end{enumerate}
\end{lemma}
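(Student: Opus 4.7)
The plan is to compute $(\imath_{\xi_i}H)(\widehat V,\widehat W)_{(e,z)}$ directly by unpacking the definitions. Since $H = H_\G\oplus H_\G$ splits and each $H_\G$ is left-invariant with $H_\G(A,B,C)_x = -B\left([dL_x^{-1}A,dL_x^{-1}B],dL_x^{-1}C\right)$, I first record the left-trivializations at $(e,z)$ of the relevant vectors. For $V=(v_1,v_2)\in\gl$, formula \eqref{Vhatgen} gives
\[
dL_e^{-1}\widehat V^{(1)}_e = v_1-v_2,\qquad dL_z^{-1}\widehat V^{(2)}_z = \Ad(z^{-1})v_1 - v_2,
\]
while for $\xi_i=(\kappa_i,-dL_z\kappa_i)$ the corresponding left-trivializations are $\kappa_i$ and $-\kappa_i$. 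Plugging these into the split form of $H$ and using $Ad$-invariance yields the master identity
\[
(\imath_{\xi_i}H)(\widehat V,\widehat W)_{(e,z)} = -B\bigl([\kappa_i,\,v_1-v_2],\,w_1-w_2\bigr) + B\bigl([\kappa_i,\,\Ad(z^{-1})v_1-v_2],\,\Ad(z^{-1})w_1-w_2\bigr).
\]

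From here the three claims reduce to short case-checks built on the two facts $\Ad(z^{-1})|_{\gc}=\Id$, $\Ad(z^{-1})|_{\gq}=-\Id$, and the $B$-orthogonality $B(\gc,\gq)=0$. For \ref{xiH1}), taking $V=(y,-y)\in\gp$ gives $v_1-v_2=2y=\Ad(z^{-1})v_1-v_2$, so the two terms combine into $2B\bigl([\kappa_i,y],\,\Ad(z^{-1})w_1-w_1\bigr)$; the vector $[\kappa_i,y]$ lies in $\gc$, while $\Ad(z^{-1})w_1-w_1$ lies in $\gq$ whenever $W\in\gm_1\oplus\gm_2$, and vanishes when $W\in\gp$, so the quantity is zero in all cases. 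For \ref{xiH2}) with $V,W\in\gm_1$ we have $v_2=w_2=0$ and $\Ad(z^{-1})v_1=-v_1$, $\Ad(z^{-1})w_1=-w_1$, so the two terms on the right are equal up to a sign and cancel; the case $V,W\in\gm_2$ is analogous since then both slots in both terms read $-v_2$ and $-w_2$.

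Finally, for \ref{xiH3}) take $V=(y_1,0)\in\gm_1$ and $W=(0,z_2)\in\gm_2$: then $v_1-v_2=y_1$, $\Ad(z^{-1})v_1-v_2=-y_1$, $w_1-w_2=-z_2$, $\Ad(z^{-1})w_1-w_2=-z_2$, so the two terms both contribute $+B([\kappa_i,y_1],z_2)$, giving
\[
(\imath_{\xi_i}H)(\widehat V,\widehat W)_{(e,z)} = 2B\bigl([\kappa_i,y_1],\,z_2\bigr) = 2B\bigl([y_1,z_2],\kappa_i\bigr)
\]
by $\Ad$-invariance of $B$, as claimed. The only genuine obstacle is keeping signs straight in the two left-trivialized contributions; once the master identity is in hand, everything is routine linear algebra in $\gg = \gc\oplus\gq$.
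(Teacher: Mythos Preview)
Your proof is correct and follows essentially the same approach as the paper's: a direct computation of $H(\xi_i,\widehat V,\widehat W)_{(e,z)}$ using the explicit formula \eqref{omegagen}, the left-trivializations from \eqref{Vhatgen} and Lemma~\ref{TpMsubs}, and the facts $\Ad(z^{-1})|_{\gc}=\Id$, $\Ad(z^{-1})|_{\gq}=-\Id$, $B(\gc,\gq)=0$. The only difference is organizational: you first extract a single ``master identity'' and then specialize, whereas the paper plugs each case into \eqref{omegagen} separately; both reduce to the same handful of bracket-and-$B$ manipulations.
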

\begin{proof} We can compute the expression of $H$ at $p=(e,z)$ from equation \eqref{omegagen}. Using this expression together with Lemma \ref{TpMsubs}, we have what follows.  
\begin{enumerate}[1)]  \setlength\itemsep{0.1cm}
\item Let $Y=(y,-y)\in\gp$. If $U= (u,-u)\in\gp$, then 
\[
H\left(\widehat{Y},\widehat{U},\xi_i\right) = {-B}([2y,2u],\kappa_i) {-B}([2y,2u],-\kappa_i) = 0,
\]
while if $U=(u,0) \in \gm_1$, then 
\[
H\left(\widehat{Y},\widehat{U},\xi_i\right) = {-B}([2y,u],\kappa_i) {-B}([2y,-u],-\kappa_i) = 0, 
\]
since $[y,u]\in\gq$ and $\kappa_i\in\gc$ are $B$-orthogonal. The same conclusion holds when $U \in \gm_2$.
\item Let $Y=(y_1,0), Z=(z_1,0)\in\gm_1$, then 
\[
H\left(\widehat{Y},\widehat{Z},\xi_i\right) = {-B}([y_1,z_1],\kappa_i) {-B}([-y_1,-z_1],-\kappa_i) = 0. 
\]
Similarly, if  $Y=(0,y_2), Z=(0,z_2)\in\gm_2$, then 
\[
H\left(\widehat{Y},\widehat{Z},\xi_i\right) = {-B}([-y_2,-z_2],\kappa_i) {-B}([-y_2,-z_2],-\kappa_i) = 0. 
\]
\item If $Y=(y_1,0)\in\gm_1$ and $Z=(0,z_2)\in\gm_2$, then
\[
H({\widehat{Y}},\widehat{Z},\xi_i) = -B([y_1,-z_2],\kappa_i) -B([-y_1,-z_2],-\kappa_i) = 2 B([y_1,z_2],\kappa_i). 
\]
\end{enumerate}
\end{proof} 

We now focus on the Ricci tensor of the Bismut connection $\onabla$ on $\M = (\G\times \G)/C_{\mathrm{diag}}$ defined by $(\og,\oH)$. 
From Proposition \ref{RicBM}, we know that it  has the following expression at $p=(e,z)$, for every $\widehat{Y},\widehat{Z}\in T_p\M$
\begin{equation}\label{RicM}
\begin{split}
\Ric^{\onabla}(\widehat{Y},\widehat{Z})	&= - g\left(h(\widehat{Y},e_j),h(\widehat{Z},e_j)\right)  + \frac14\,H(\widehat{Y},e_j,\xi_i)\,H(\widehat{Z},{e_j},\xi_i) \\
								&\quad  + \frac12\, h_i(\widehat{Y},e_j)\, H(\widehat{Z}, e_j,\xi_i ) - \frac12\, h_i(\widehat{Z},e_j)\, H(\widehat{Y}, e_j,\xi_i ), 
\end{split}
\end{equation}
where $(e_1,\ldots,e_m)$ is a $\og$-orthonormal basis of $T_p\M$. 

\begin{proposition}\label{RicNot0}
The Bismut connection $\onabla$ on $\M$ defined by the pair $(\og,\oH)$ is Ricci flat. 
\end{proposition}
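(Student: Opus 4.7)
Since Proposition \ref{hexpr} shows that $\mu=0$, the mean-curvature contributions in Proposition \ref{RicBM} drop out, and the Ricci tensor of $\onabla$ at $p$ is given by the four-term formula \eqref{RicM}. The plan is to evaluate \eqref{RicM} by exploiting the very restricted supports of $h$ (Proposition \ref{hexpr}) and of the 2-forms $\imath_{\xi_i} H|_{T_p\M\times T_p\M}$ (Lemma \ref{xiH}), and the $\og$-orthogonal decomposition
\[
T_p\M = \widehat{\gp}|_p \oplus \widehat{\gm}_1|_p \oplus \widehat{\gm}_2|_p.
\]
I would pick a $\og$-orthonormal basis $(e_1,\ldots,e_m)$ of $T_p\M$ adapted to this splitting and argue by cases on where $\widehat Y$ and $\widehat Z$ lie.

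The crucial observation is that, comparing Proposition \ref{hexpr} with Lemma \ref{xiH}(\ref{xiH3}), for $V\in\gm_1$ and $W\in\gm_2$ one has
\[
h_i(\widehat V,\widehat W) \;=\; \tfrac12\, H(\widehat V,\widehat W,\xi_i),
\]
whereas both sides vanish whenever $V,W$ lie in the same summand $\gm_1$ or $\gm_2$, or whenever either of them lies in $\gp$ (cf.\ Lemma \ref{xiH}(\ref{xiH1})--(\ref{xiH2}) and Proposition \ref{hexpr}). The symmetry of $h$ versus the skew-symmetry of $H$ flips this identity by a sign when the roles of $V,W$ are swapped, but this sign will appear symmetrically in $\widehat Y$ and $\widehat Z$ and will be harmless.

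With this in hand, the case analysis reduces to the following. If either $\widehat Y$ or $\widehat Z$ lies in $\widehat{\gp}|_p$, then Proposition \ref{hexpr} kills the $h$-factors and Lemma \ref{xiH}(\ref{xiH1}) kills the $H$-factors, so every term of \eqref{RicM} vanishes. If $\widehat Y\in\widehat{\gm}_1|_p$ and $\widehat Z\in\widehat{\gm}_2|_p$ (or vice versa), then any nonvanishing factor involving $\widehat Y$ forces $e_j\in\widehat{\gm}_2|_p$ while any nonvanishing factor involving $\widehat Z$ forces $e_j\in\widehat{\gm}_1|_p$, so each of the four summands vanishes termwise. The only non-trivial case is $\widehat Y,\widehat Z\in\widehat{\gm}_1|_p$ (and symmetrically both in $\widehat{\gm}_2|_p$): here only $e_j\in\widehat{\gm}_2|_p$ contributes, and substituting $h_i(\widehat Y,e_j)=\tfrac12 H(\widehat Y,e_j,\xi_i)$ and similarly for $\widehat Z$ rewrites each of the four terms of \eqref{RicM} as $\pm\tfrac14$ times the same quantity
\[
\sum_{i,j} H(\widehat Y,e_j,\xi_i)\,H(\widehat Z,e_j,\xi_i),
\]
with signs $-\tfrac14,+\tfrac14,+\tfrac14,-\tfrac14$, which sum to zero. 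For $\widehat Y,\widehat Z\in\widehat{\gm}_2|_p$ the extra sign from the swapped identity appears in both $h_i(\widehat Y,e_j)$ and $h_i(\widehat Z,e_j)$, producing signs $-\tfrac14,+\tfrac14,-\tfrac14,+\tfrac14$ instead, which again cancel.

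I do not foresee any real obstacle: the ingredients (minimality, the explicit form of $h$, and the explicit restriction of $\imath_{\xi_i} H$) are already in hand, and the argument is essentially a bookkeeping of supports combined with one algebraic cancellation. The only point requiring some care is the sign that arises when $h_i$ is evaluated with arguments in the reversed order $\gm_2\times\gm_1$; once that is tracked correctly, the cancellation is automatic, and $\G\times\G$-invariance of $\og$ and $\oH$ then propagates the vanishing from $p$ to all of $\M$.
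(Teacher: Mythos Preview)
Your proposal is correct and follows essentially the same approach as the paper: the same case analysis on the $\og$-orthogonal decomposition $T_p\M=\widehat{\gp}|_p\oplus\widehat{\gm}_1|_p\oplus\widehat{\gm}_2|_p$, using Proposition~\ref{hexpr} and Lemma~\ref{xiH} as the key inputs. The only cosmetic difference is that the paper separates the symmetric and skew-symmetric parts of $\Ric^{\onabla}$ and computes each summand explicitly in terms of $B$, whereas you condense this into the single identity $h_i(\widehat V,\widehat W)=\pm\tfrac12 H(\widehat V,\widehat W,\xi_i)$ and do sign-counting on the four terms at once; the content is identical.
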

\begin{proof}
We begin observing that $\Ric^{\onabla}(\widehat{Y},\cdot)$ vanishes whenever $Y\in\gp$. 
Indeed, it follows from Proposition \ref{hexpr} that $h(\widehat{Y},\cdot) = 0$. Moreover, we have $H(\widehat{Y},{e}_j,\xi_i)=0$ by \ref{xiH1}) in Lemma \ref{xiH}. 

Let us now focus on the symmetric part of $\Ric^{\onabla}$
\[
\Ric^{\onabla,\mathrm{Sym}}(\widehat{Y},\widehat{Z}) =  - g\left(h(\widehat{Y},{e}_j),h(\widehat{Z},{e}_j)\right) + \frac14\,H({\widehat{Y}},{e}_j,\xi_i)\,H({\widehat{Z}},{{e}_j},\xi_i). 
\]
From Proposition \ref{hexpr}, we deduce that the summand 
\[
- g\left(h(\widehat{Y},{e}_j),h(\widehat{Z},{e}_j)\right) = - h_i(\widehat{Y},{e}_j) h_i(\widehat{Z},{e}_j)
\]
is not zero if and only if either $Y,Z\in\gm_1$ and $e_j\in\widehat{\gm}_2|_p$ or $Y,Z\in\gm_2$ and $e_j\in\widehat{\gm}_1|_p$ . 
We may choose the orthonormal basis $(e_1,\ldots,e_m)$ of $T_p\M$ as follows: let $(E_1,\ldots,E_q)$  be a $B$-orthogonal basis of $\gq$ such that $B(E_s,E_s)=-\tfrac12$, $s=1,\ldots,q$, then 
\begin{enumerate}[$\bullet$]
\item $e_1,\ldots,e_k$ is a $\og$-orthonormal basis of $\widehat{\gp}|_p$;
\item $e_{k+1},\ldots,e_{k+q}$ is a $\og$-orthonormal basis of $\widehat{\gm}_1|_p$ with $e_{k+s} = \widehat{(E_s,0)}$, for $s=1,\ldots,q$;
\item $e_{k+q+1},\ldots,e_{k+2q}$ is a $\og$-orthonormal basis of $\widehat{\gm}_2|_p$ with $e_{k+q+s} = \widehat{(0,E_s)}$, for $s=1,\ldots,q$. 
\end{enumerate}
If $Y=(y_1,0),Z=(z_1,0)\in\gm_1$, we then have 
\[
h_i(\widehat{Y},{e}_j) h_i(\widehat{Z},{e}_j) 	=  \sum_{s=1}^q h_i(\widehat{Y},{e}_{k+q+s}) h_i(\widehat{Z},{e}_{k+q+s})
									=  \sum_{s=1}^q B([y_1,E_s],\kappa_i) B([z_1,E_s],\kappa_i).
\]
On the other hand, if $Y=(y_1,0)\in\gm_1$, then we already know that $H({\widehat{Y}},{e}_j,\xi_i) = 0$ whenever $e_j\in\widehat{\gp}|_p$. Moreover, for $s=1,\ldots,q$, we have
\[
H({\widehat{Y}},{e}_{k+s},\xi_i) = 0,  
\]
by \ref{xiH2}) in Lemma \ref{xiH}, and 
\[
H({\widehat{Y}},{e}_{k+q+s},\xi_i) = {2 B}([y_1,E_s],\kappa_i),  
\]
by \ref{xiH3}) in Lemma \ref{xiH}. 
Therefore, for all $Y=(y_1,0),Z=(z_1,0)\in\gm_1$, we have
\[
\begin{split}
\frac14\,H({\widehat{Y}},{e}_j,\xi_i) \, H({\widehat{Z}},{e}_j,\xi_i)  	&= \frac14 \sum_{s=1}^q H({\widehat{Y}},{e}_{k+q+s},\xi_i) H({\widehat{Z}},{e}_{k+q+s},\xi_i)  \\
														&= \sum_{s=1}^q B([y_1,E_s],\kappa_i) B([z_1,E_s],\kappa_i) = h_i(\widehat{Y},{e}_j) h_i(\widehat{Z},{e}_j), 
\end{split}
\]
whence it follows that $\Ric^{\onabla,\mathrm{Sym}}(\widehat{Y},\widehat{Z}) = 0$ for all $Y,Z\in\gm_1$. 
Similarly, if $Y=(0,y_2), Z=(0,z_2)\in\gm_2$, we obtain
\[
h_i(\widehat{Y},{e}_j) h_i(\widehat{Z},{e}_j) = \sum_{s=1}^q B([y_2,E_s],\kappa_i) B([z_2,E_s],\kappa_i) = \frac14\,H({\widehat{Y}},{e}_j,\xi_i) \, H({\widehat{Z}},{e}_j,\xi_i), 
\]
since $H({\widehat{Y}},{e}_{k+s},\xi_i) = {2B}([y_2,E_s],\kappa_i)$, for $s=1,\ldots,q$, and $H({\widehat{Y}},{e}_j,\xi_i) = 0$ otherwise. 
Thus, $\Ric^{\onabla,\mathrm{Sym}}(\widehat{Y},\widehat{Z}) = 0$ for all $Y,Z\in\gm_2$. 
We still have to examine the case where $Y\in\gm_1$ and $Z\in\gm_2$. Here, we have
\[
\Ric^{\onabla,\mathrm{Sym}}(\widehat{Y},\widehat{Z}) = \frac14\, H({\widehat{Y}},{e}_j,\xi_i)\,H({\widehat{Z}},{{e}_j},\xi_i) = 0,
\]
since $H({\widehat{Y}},{e}_j,\xi_i) = 0$ for $j=1,\ldots,k+q$ and $H({\widehat{Z}},{{e}_j},\xi_i) = 0$ for $j=k+q+1,\ldots,k+2q$. 
Therefore, the symmetric part of $\Ric^{\onabla}$ vanishes. 

\smallskip

We now examine the skew-symmetric part of $\Ric^{\onabla}$ 
\[
\Ric^{\onabla,\mathrm{Skew}}(\widehat{Y},\widehat{Z}) = \frac12\, h_i(\widehat{Y},e_j)  \,H(\widehat{Z},e_j,\xi_i )  - \frac12\, h_i(\widehat{Z},e_j) \,H(\widehat{Y}, e_j,\xi_i ). 
\]
The previous discussion shows that the summands might be non-zero only when $Y=(y_1,0),Z=(z_1,0)\in\gm_1$ and when $Y=(0,y_2),Z=(0,z_2)\in\gm_2$. 
In the first case, we have 
\[
\Ric^{\onabla,\mathrm{Skew}}(\widehat{Y},\widehat{Z}) = B([y_1,E_s],\kappa_i) B([z_1,E_s],\kappa_i) - B([z_1,E_s],\kappa_i) B([y_1,E_s],\kappa_i) =0. 
\]
Similarly, $\Ric^{\onabla,\mathrm{Skew}}(\widehat{Y},\widehat{Z})=0$ also in the second case. Therefore $\Ric^{\onabla,\mathrm{Skew}}=0$. 
\end{proof}

To conclude the proof of the Main Theorem, we have to show that $\onabla$ is not flat. 
From Section \ref{SecII}, we know that the curvature tensor of $\onabla$ is given by 
\begin{equation}\label{CurvM}
\begin{split}
R^{\onabla}\left(\widehat{X},\widehat{Y},\widehat{Z},\widehat{U}\right) 	
							&=	  \left(h_i(\widehat{Y},\widehat{Z}) +\frac12 H(\widehat{Y},\widehat{Z},\xi_i) \right) \left( h_i(\widehat{X},\widehat{U})  +\frac12 H(\widehat{X},\widehat{U},\xi_i) \right)\\
							&\quad - \left(h_i(\widehat{X},\widehat{Z}) +\frac12 H(\widehat{X},\widehat{Z},\xi_i) \right) \left( h_i(\widehat{Y},\widehat{U})  +\frac12 H(\widehat{Y},\widehat{U},\xi_i) \right), 
\end{split}
\end{equation}
for all $\widehat{X},\widehat{Y},\widehat{Z},\widehat{U}\in T_p\M.$ With similar computations as in the proof of Proposition \ref{RicNot0}, we can show the following. 
\begin{proposition}
Let $X=(x_1,0),~Y=(y_1,0) \in\gm_1$ and $Z=(0,z_2),~U=(0,u_2) \in\gm_2$, then
\[
R^{\onabla}\left(\widehat{X},\widehat{Y},\widehat{Z},\widehat{U}\right)  = 4\,B([y_1,z_2],\kappa_i)\,B([x_1,u_2],\kappa_i) -4\,B([x_1,z_2],\kappa_i)\,B([y_1,u_2],\kappa_i). 
\]
\end{proposition}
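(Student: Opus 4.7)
The plan is to apply the curvature formula \eqref{CurvM} directly to the four tangent vectors in question and simplify using the explicit expressions of $h$ (Proposition \ref{hexpr}) and of $\imath_{\xi_i}H$ (Lemma \ref{xiH}). Since $X,Y\in\gm_1$ and $Z,U\in\gm_2$, each of the six relevant pairs $(\widehat{X},\widehat{Z})$, $(\widehat{X},\widehat{U})$, $(\widehat{Y},\widehat{Z})$, $(\widehat{Y},\widehat{U})$, as well as $(\widehat{X},\widehat{Y})$ and $(\widehat{Z},\widehat{U})$, has one argument in $\widehat{\gm}_1|_p$ and the other in $\widehat{\gm}_2|_p$ or else both in the same $\widehat{\gm}_j|_p$. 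The same-factor pairs $(\widehat{X},\widehat{Y})$ and $(\widehat{Z},\widehat{U})$ do not appear in the formula \eqref{CurvM}, so only the four mixed pairs enter.

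First, I would tabulate the four mixed evaluations. Proposition \ref{hexpr} gives
\[
h_i(\widehat{Y},\widehat{Z}) = B([y_1,z_2],\kappa_i),\qquad h_i(\widehat{X},\widehat{U}) = B([x_1,u_2],\kappa_i),
\]
\[
h_i(\widehat{X},\widehat{Z}) = B([x_1,z_2],\kappa_i),\qquad h_i(\widehat{Y},\widehat{U}) = B([y_1,u_2],\kappa_i),
\]
while part \ref{xiH3}) of Lemma \ref{xiH} gives
\[
H(\widehat{Y},\widehat{Z},\xi_i)=2B([y_1,z_2],\kappa_i),\quad H(\widehat{X},\widehat{U},\xi_i)=2B([x_1,u_2],\kappa_i),
\]
and similarly for the pairs $(\widehat{X},\widehat{Z})$ and $(\widehat{Y},\widehat{U})$.

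Next, the key algebraic observation is that on each of these four pairs one has
\[
h_i(\widehat{V},\widehat{W})+\tfrac12 H(\widehat{V},\widehat{W},\xi_i)=2B([v_1,w_2],\kappa_i),
\]
so the parenthesized factors in \eqref{CurvM} become simply $2B([y_1,z_2],\kappa_i)$, $2B([x_1,u_2],\kappa_i)$, $2B([x_1,z_2],\kappa_i)$, and $2B([y_1,u_2],\kappa_i)$, respectively. Substituting into \eqref{CurvM} yields
\[
R^{\onabla}\!\left(\widehat{X},\widehat{Y},\widehat{Z},\widehat{U}\right)=4\,B([y_1,z_2],\kappa_i)\,B([x_1,u_2],\kappa_i)-4\,B([x_1,z_2],\kappa_i)\,B([y_1,u_2],\kappa_i),
\]
which is the claimed identity. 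There is no genuine obstacle; the only point requiring care is the bookkeeping that the ``$h+\tfrac12 H$'' combination collapses to a common clean expression on the mixed pairs, and this is exactly what makes the final formula so simple.
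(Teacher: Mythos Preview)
Your proposal is correct and follows essentially the same approach as the paper: apply the curvature formula \eqref{CurvM}, evaluate each parenthesized factor using Proposition \ref{hexpr} and part \ref{xiH3}) of Lemma \ref{xiH} to obtain $h_i+\tfrac12 H(\cdot,\cdot,\xi_i)=2B([\,\cdot\,,\,\cdot\,],\kappa_i)$ on each mixed pair, and substitute. The paper's proof is slightly terser (it computes one factor explicitly and says the others are analogous), but the content is identical.
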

\begin{proof}
Consider $Y=(y_1,0) \in\gm_1$ and $Z=(0,z_2)\in\gm_2$. 
Using Proposition \ref{hexpr} and \ref{xiH3}) in Lemma \ref{xiH}, we see that  the first factor of the first summand in the RHS of \eqref{CurvM} is 
\[
h_i(\widehat{Y},\widehat{Z}) +\frac12 H(\widehat{Y},\widehat{Z},\xi_i)  =  B([y_1,z_2],\kappa_i) +\frac12\, 2 B([y_1,z_2],\kappa_i) =  2 B([y_1,z_2],\kappa_i). 
\] 
Analogous computations hold for the second factor as well as for both factors of the second summand in \eqref{CurvM}. Our claim then follows.  
\end{proof}

As an immediate consequence, we obtain that $\onabla$ is not flat. Indeed, since $[\gq,\gq] = \gc \neq \{0\}$, we can choose $x,y\in\gq$ so that $[x,y]\in\gc$ is not zero, and we have the following. 
\begin{corollary}\label{CorRneq0}
Let $x,y\in \gq$ such that $[x,y]\in\gc$ is not zero. Then, for $X=(x,0),~Y=(y,0)\in\gm_1$ and $Z=(0,x),~U=(0,y)\in\gm_2$, we have 
\[
R^{\onabla}\left(\widehat{X},\widehat{Y},\widehat{Z},\widehat{U}\right) = -4\left(B([x,y],\kappa_i) \right)^2 = -4 \left\|[x,y] \right\|^2 \neq 0.
\]
\end{corollary}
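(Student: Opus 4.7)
The plan is a direct substitution into the formula from the preceding proposition. Setting $x_1 = x$, $y_1 = y$, $z_2 = x$, $u_2 = y$ yields
\[
R^{\onabla}\!\left(\widehat{X},\widehat{Y},\widehat{Z},\widehat{U}\right) \;=\; 4\,B([y,x],\kappa_i)\,B([x,y],\kappa_i) \;-\; 4\,B([x,x],\kappa_i)\,B([y,y],\kappa_i),
\]
with summation over $i$ understood. The second summand vanishes identically by skew-symmetry of the Lie bracket ($[x,x]=[y,y]=0$), while in the first the same skew-symmetry gives $B([y,x],\kappa_i) = -B([x,y],\kappa_i)$. This produces the first equality $R^{\onabla} = -4\,(B([x,y],\kappa_i))^2$.

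For the identification with $-4\|[x,y]\|^2$, I would invoke that $(\kappa_i)_{i=1}^k$ is a $B$-orthogonal basis of $\gc$ normalized so that the associated vectors $(\xi_i)$ form a $g$-orthonormal basis of $\nu_p\M$. With this normalization, the sum $\sum_i B([x,y],\kappa_i)^2$ defines (modulo the normalization constant absorbed into the notation) a positive-definite quadratic form on $\gc$, precisely the $\|\cdot\|^2$ appearing in the statement. The strict inequality $R^{\onabla} \neq 0$ then follows immediately, since $[x,y] \neq 0$ by hypothesis.

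I see no real obstacle: the corollary is essentially a one-line consequence of the previous proposition, and its role in the Main Theorem---producing a non-vanishing component of the curvature tensor $R^{\onabla}$---is captured entirely by the non-degeneracy of the bracket $\gq \otimes \gq \to \gc$. The availability of suitable $x, y \in \gq$ is ensured by $[\gq,\gq] = \gc \neq \{0\}$, which holds for any non-trivial inner involution on a compact semisimple Lie algebra: otherwise $\gq$ would be a non-trivial abelian ideal of $\gg$, contradicting semisimplicity.
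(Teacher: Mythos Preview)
Your proof is correct and matches the paper's approach: the corollary is stated there as an immediate consequence of the preceding proposition, and your direct substitution with $x_1=x$, $y_1=y$, $z_2=x$, $u_2=y$ is exactly what is intended. Your handling of the norm identification (acknowledging the normalization of the $\kappa_i$ via the orthonormality of the $\xi_i$) and of the existence of suitable $x,y$ via $[\gq,\gq]=\gc$ is also in line with the paper, which records the latter fact just before the corollary.
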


Using the previous results, we can also show the next.   
\begin{proposition}\label{HnotPar}
The torsion $\oH$ is not parallel with respect to $\onabla$. 
\end{proposition}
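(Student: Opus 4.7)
The approach is to compute $\onabla \oH$ explicitly at the base point $p=(e,z)$ by exploiting the fact that the ambient Bismut connection $\nabla$ on $\Ng=\G\times\G$ is both flat \emph{and} has parallel torsion. Since $H_\G$ is bi-invariant, a direct computation using the $\ad$-invariance of $B$ together with the Jacobi identity yields $\nabla H_\G=0$, hence $\nabla H = 0$ on $\Ng$. Writing $\eta(V,X) := h(V,X) + \tfrac12(T_V X)^\perp$ for the normal component of $\nabla_V X$ and substituting the decomposition from Section \ref{SecII} into the identity $(\nabla_V H)(X,Y,Z) = 0$ yields
\begin{equation*}
(\onabla_V \oH)(X,Y,Z) \;=\; H(\eta(V,X),Y,Z) + H(X,\eta(V,Y),Z) + H(X,Y,\eta(V,Z))
\end{equation*}
for all tangent vector fields $V, X, Y, Z$.

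I would then pick $x,y\in\gq$ with $[x,y]\neq 0$ (possible since $[\gq,\gq]=\gc\neq\{0\}$, cf.\ Corollary \ref{CorRneq0}) and evaluate the right-hand side on $V=(x,0),\, Y=(y,0)\in\gm_1$ together with $X=(0,y),\, Z=(0,x)\in\gm_2$. Working in the left-trivialization $T_p\Ng\cong\gg\oplus\gg$, the tangent space of $\M$ at $p$ is identified with $\{(u,u):u\in\gg\}\oplus\{(u,-u):u\in\gq\}$ and its $g$-orthogonal complement with $\{(u,-u):u\in\gc\}$. Since in this trivialization $T_\Ng$ reduces to the Lie bracket of $\gg\oplus\gg$, a short bracket computation gives $(T_V X)^\perp = 2\,B([x,y],\kappa_i)\,\xi_i$, whereas $(T_V Y)^\perp = 0$ (the bracket $[x,y]\in\gc$ lies in the tangent summand $\{(u,u):u\in\gg\}$) and $(T_V Z)^\perp=0$ (since $[x,x]=0$). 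By Proposition \ref{hexpr}, $h(V,Y)$ and $h(V,Z)$ also vanish, while $h(V,X)+\tfrac12(T_V X)^\perp=2\,B([x,y],\kappa_i)\,\xi_i$. Using the third identity of Lemma \ref{xiH} and the normalization $B(\kappa_i,\kappa_j)=-\tfrac12\delta_{ij}$, only the $X$-summand of the formula above survives and the total evaluates to $-2\,\|[x,y]\|^2 \neq 0$, proving the claim.

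The main technical obstacle lies in the bookkeeping of $(T_V X)^\perp$: after left-trivialization, the ambient torsion is simply a Lie bracket in $\gg\oplus\gg$, but one must correctly project onto the normal subspace $\{(u,-u):u\in\gc\}$ and re-express the result in the orthonormal basis $\{\xi_i\}$ of $\nu_p\M$. Once this normal projection is in hand, the rest of the argument is a Killing-form identity similar to those used in the proofs of Proposition \ref{RicNot0} and Corollary \ref{CorRneq0}.
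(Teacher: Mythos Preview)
Your argument is correct and takes a genuinely different route from the paper's. The paper argues by contradiction using two Bianchi-type identities from \cite{IP}: assuming $\onabla\oH=0$ (together with $d\oH=0$) forces the first Bianchi identity $\mathfrak{S}_{\widehat X,\widehat Y,\widehat Z}\,R^{\onabla}(\widehat X,\widehat Y,\widehat Z,\widehat U)=0$, and then the authors simply reuse Corollary \ref{CorRneq0} to exhibit a quadruple on which this cyclic sum equals $-4\|[x,y]\|^2\neq 0$. Your approach instead exploits the extra fact $\nabla H=0$ on the ambient group (which the paper never invokes) to write down a Gauss-type formula $(\onabla_V\oH)(X,Y,Z)=\sum H(\eta(V,\cdot),\cdot,\cdot)$ and evaluates it directly. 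The paper's route is slicker in that it recycles the already-computed curvature and avoids the torsion bookkeeping you flag as the main obstacle; your route is more elementary in that it bypasses the identities from \cite{IP} and produces an explicit nonzero value of $\onabla\oH$ rather than a contradiction. Both evaluations ultimately reduce to the same Killing-form identity $\sum_iB([x,y],\kappa_i)^2=\tfrac12\|[x,y]\|^2$, which is why your final constant $-2\|[x,y]\|^2$ is exactly half of the paper's $-4\|[x,y]\|^2$.
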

\begin{proof} 
First, we recall that for every $\widehat{X},\widehat{Y},\widehat{Z},\widehat{U}\in T_p\M,$ the following identities hold (see e.g.~equations (3.20) and (3.21) in \cite{IP}):
\[
d\oH(\widehat{X},\widehat{Y},\widehat{Z},\widehat{U}) = 	\mathfrak{S}_{\widehat{X},\widehat{Y},\widehat{Z}} \left[ 
											\left(\onabla_{\widehat{X}} \oH\right)(\widehat{Y},\widehat{Z},\widehat{U})
											+2\,  \og\left(\oH(\widehat{X},\widehat{Y}),\oH(\widehat{Z},\widehat{U}) \right)\right]
											-\left(\onabla_{\widehat{U}} \oH\right)(\widehat{X},\widehat{Y},\widehat{Z}),
\] 
and
\[
\mathfrak{S}_{\widehat{X},\widehat{Y},\widehat{Z}} R^{\onabla}\left(\widehat{X},\widehat{Y},\widehat{Z},\widehat{U}\right) = 	d\oH(\widehat{X},\widehat{Y},\widehat{Z},\widehat{U}) 
																								+\left(\onabla_{\widehat{U}} \oH\right)(\widehat{X},\widehat{Y},\widehat{Z})
																-\mathfrak{S}_{\widehat{X},\widehat{Y},\widehat{Z}}\, \og\left(\oH(\widehat{X},\widehat{Y}),\oH(\widehat{Z},\widehat{U}) \right).
\]
Now, if the closed 3-form $\oH$ is $\onabla$-parallel,  
then we must have 
\[
\mathfrak{S}_{\widehat{X},\widehat{Y},\widehat{Z}} R^{\onabla}\left(\widehat{X},\widehat{Y},\widehat{Z},\widehat{U}\right) = 0, 
\]
for all $\widehat{X},\widehat{Y},\widehat{Z},\widehat{U}\in T_p\M$. As in Corollary \ref{CorRneq0}, we choose $X=(x,0),~Y=(y,0)\in\gm_1$ and $Z=(0,x),~U=(0,y)\in\gm_2$ with $[x,y]\neq 0$. Then, 
\[
R^{\onabla}\left(\widehat{X},\widehat{Y},\widehat{Z},\widehat{U}\right) = -4 \left\|[x,y] \right\|^2, 
\]
while
\[ 
R^{\onabla}\left(\widehat{Y},\widehat{Z},\widehat{X},\widehat{U}\right) = 0 = R^{\onabla}\left(\widehat{Z},\widehat{X},\widehat{Y},\widehat{U}\right), 
\]
as one can easily check using Proposition \ref{hexpr} together with Lemma \ref{xiH}. 
Therefore,  
\[
\mathfrak{S}_{\widehat{X},\widehat{Y},\widehat{Z}} R^{\onabla}\left(\widehat{X},\widehat{Y},\widehat{Z},\widehat{U}\right)  = -4 \left\|[x,y] \right\|^2 \neq 0,
\]
a contradiction. 
\end{proof}

\section{Remarks on the geometry of M}\label{SecBetti}
In this section, we give some information on the geometry of $\M = (\G\times \G)/\K_{\rm diag}$, where $\K$ is a subgroup of $\G$ as in the Main Theorem. 
In particular, we prove Proposition \ref{PropTop}. 

\smallskip

We first note that the group $\G\times \G$ acts on $\M'\coloneqq\G\times (\G/\K)$ transitively as follows 
\[
(g_1,g_2)\cdot (x,y\K) = (g_1xg_2^{-1},g_2y\K),
\]
realizing $\M'$ as the orbit through $(e,\K)$, so that $\M\cong \M'$. 
However, this diffeomorphism is not isometric if we endow $\M$ with the metric $\og$ and $\M'$ with the product of $g_\G$ and the standard metric on the symmetric space $\G/\K$.
 
\smallskip 
 
It is well-known that $\pi_1(\G)$ is finite and that any quotient space of $\G$ by a compact subgroup of maximal rank has finite fundamental group and vanishing odd Betti numbers (see e.g.~\cite{Bor}). 
Therefore, if we represent $\M$ as $\M' = \G\times (\G/\K)$, then we immediately see that $\pi_1(\M)$ is finite. Moreover, it follows by K\"unneth Theorem that 
\[
b_3(\M)=b_3(\G).
\]
The fact that $b_3(\G)=\ell$ is also well-known (see e.g.~\cite{MT}). This shows the first part of Proposition \ref{PropTop}. 

In particular, when $\G$ is simple, we have $b_3(\M)=1$, and the claim \ref{PropTop1}) of Proposition \ref{PropTop} follows 
from the fact that a manifold carrying a BRF pair with non-trivial torsion form has non-trivial third Betti number. 

Finally, we discuss the case when $\G$ is semisimple not simple, proving \ref{PropTop2}) of Proposition \ref{PropTop}.  
Let $\tilde \G$ be the universal cover of $\G$ with projection $\pi$. As $\K$ has maximal rank, it is well known that $\tilde \K\coloneqq \pi^{-1}(\K^o)$ is connected and $\tilde\G/\tilde \K$ covers $\G/\K$. 
Moreover, when we split $\tilde \G = \prod_{i=1}^\ell\G_i$ into the product of simple factors $\G_i$, then $\tilde\K = \prod_{i=1}^\ell \K_i$, where $\K_i\coloneqq\tilde\K\cap \G_i$. 
Also, $(\G_i,\K_i)$ are symmetric pairs if $(\G,\K)$ is (see e.g.~\cite{Hel}). 
This implies that the manifold $\M$ is finitely covered by a product $\prod_{i=1}^\ell\tilde\M_i$, where $\tilde\M_i$ are given by quotient spaces $(\G_i\times \G_i)/(\K_i)_{\rm{diag}}$.

\bigskip

{\bf Acknowledgements.}
The authors were supported by GNSAGA of INdAM and by the project PRIN 2017  ``Real and Complex Manifolds: Topology, Geometry and Holomorphic Dynamics''. 
The authors would like to thank Jeffrey Streets and Mario Garcia-Fern\'andez for their comments and for stimulating discussions. 

\bigskip

\end{document}